\theoremstyle{plain}
\newtheorem{thm}{Theorem}[section]
\newtheorem{theorem}{Theorem}[section]
\newtheorem{corollary}[thm]{Corollary}
\newtheorem{proposition}[thm]{Proposition}
\theoremstyle{definition}
\newtheorem{defn}{Definition}[section]
\newtheorem{remark}[defn]{Remark}
\newtheorem{example}[defn]{Example}
\theoremstyle{remark}
\numberwithin{equation}{section}
\numberwithin{figure}{section}
\newcommand{\nc}{\newcommand}
\nc{\G}{{\Gamma}}
\nc{\BC}{{\mathbb C}}
\nc{\BQ}{{\mathbb Q}}
\nc{\BR}{{\mathbb R}}
\nc{\BZ}{{\mathbb Z}}
\nc{\BP}{{\mathbb P}}
\nc{\BN}{{\mathbb N}}
\nc{\BM}{{\mathbb M}}
\nc{\fH}{{\mathfrak{H}}}
\nc{\vp}{{\varepsilon}}\nc{\dpar}{{\partial}}\nc{\al}{{\alpha}}
\nc{\PSL}{{\mbox{PSL}_2(\BR)}}
\nc{\PS}{{\mbox{PSL}_2(\BZ)}}
\nc{\SL}{{\mbox{SL}_2(\BZ)}}
\DeclareMathOperator{\re}{Re} \DeclareMathOperator{\im}{Im}
\DeclareMathOperator{\supp}{\mathrm{supp}}
\DeclareMathOperator{\Res}{\mathcal{R}}
\DeclareMathOperator*{\res}{\mathrm{Res}}
\def\I{\mathrm{i}}
\def\scal#1#2{\langle #1, #2\rangle }
\def\Log{{\rm Log\,}}
\def\D{{\mathbb D}}
\def\R{{\mathbb R}}
\def\C{{\mathbb C}}
\def\P{{\mathbb P}}
\def\Z{{\mathbb Z}}
\begin{document}

\title{Exponential transforms, resultants
and moments}
\author{Bj\"orn Gustafsson\textsuperscript{1} }

\date{December 3, 2012}

\maketitle

\begin{abstract}
We give an overview of some recent developments concerning harmonic and other moments of
plane domains, their relationship to the Cauchy and exponential transforms,
and to the meromorphic resultant and elimination function. The paper also connects to certain topics in mathematical physics, for
example domain deformations generated by harmonic gradients (Laplacian growth) and related integrable structures.
\end{abstract}



\noindent {\it Keywords:}
Cauchy transform, exponential transform, resultant, elimination function, moment,
quadrature domain, Laplacian growth, string equation, Polubarinova-Galin equation.
\footnotetext[1] {Department of Mathematics, KTH, 100 44,
Stockholm, Sweden.\\ Email: \tt{gbjorn@kth.se}}

\noindent {\it Subject Classification:} {Primary: 30-02; Secondary: 13P15, 30E05, 31A05, 76D27.}

\noindent {\it Acknowledgements:} This work has been performed within the framework of
the European Science Foundation Research Networking Programme HCAA,
and has been supported by the Swedish Research Council and the G\"oran Gustafsson Foundation,


\section{Introduction}

In the present article we will focus on developments concerning harmonic and other moments
of a domain in the complex plane, the exponential transform and, in the case of a quadrature domain,
the relation between these objects and the resultant and elimination function.
Much of the material is based on joint work with Vladimir Tkachev, Mihai Putinar, Ahmed Sebbar
and is in addition inspired by ideas of Mark Mineev-Weinstein, Paul Wiegmann, Anton Zabrodin and
others.

The organization of the paper is as follows. Section~\ref{sec:Cauchy} gives the basic definitions of
Cauchy and exponential transforms, including extended versions in four complex variables. In Section~\ref{sec:moments}
it is shown how these generate harmonic and exponential moments. In the case of quadrature domains
(equivalently, algebraic domains or finitely determined domains), there are strong algebraic relationships between the various
transforms and moments, and this is exposed in Section~\ref{sec:finitely}.
Most of the material in Sections~\ref{sec:Cauchy}--\ref{sec:finitely} is by now relatively classical, being based on developments in the period 1970-2000 within complex analysis and operator theory. For example, the exponential transform came out as a byproduct from the theory
of hyponormal operators.

Central for the paper is Section~\ref{sec:resultant} on the meromorphic resultant, which was introduced in \cite{Gustafsson-Tkachev-2009}.
One main message is that, in the case of quadrature domains, the elimination function, which is defined by means of the
meromorphic resultant (and hence is a purely algebraic object), turns out to be the same as the exponential transform (which is an analytic object).
In Section~\ref{sec:potential}, some potential theoretic interpretations of the resultant are given.

In the last three sections,  Section~\ref{sec:moment coordinates}--\ref{sec:hamiltonians}, we
connect the previously discussed material to some quite exciting and relatively recent developments in mathematical physics.
It much concerns deformations (or variations) of domains when the harmonic moments are used as coordinates
(in the simply connected case). Such deformations fit into the framework of Laplacian growth processes,
or moving boundaries for Hele-Shaw flows. More exactly,
they can naturally be thought of as evolutions driven by harmonic gradients.
We mostly restrict to finite dimensional subclasses of domains, namely polynomial (of a fixed degree)
conformal images of the unit disk, this in order to keep the presentation transparent and rigorous.
A nice feature then is that the Jacobian determinant for the transition between harmonic moment coordinates and the coordinates provided by the
coefficients of the polynomial mapping functions can be made fully explicit, (\ref{jacobian}), in terms of a resultant involving the mapping
function. This result is due to O.~Kuznetsova and V.~Tkachev \cite{Kuznetsova-Tkachev-2004},
\cite{Tkachev-2005}.

The further results concern the string equation (\ref{string}) (which is equivalent to the Polubarinova-Galin equation for
a Hele-shaw blob), integrability properties of the harmonic moments (\ref{M_k}),
and a corresponding prepotential (\ref{prepotential}), which is the logarithm of a $\tau$-function and which
in some sense explains the mentioned integrability properties. Finally, we
give Hamiltonian formulations of the evolution equation for the conformal map under
variation of the moments (\ref{fH}). All these matters are due to M.~Mineev-Weinstein, P.~Wiegmann, A.~Zabrodin
and others, see for example
\cite{Wiegmann-Zabrodin-2000}, \cite{Mineev-Zabrodin-2001}, \cite{Marshakov-Wiegmann-Zabrodin-2002},
\cite{Krichever-Marshakov-Zabrodin-2005},  but we try to
explain everything in our own words and using the particular settings of the present paper.

\subsection{List of notations}\label{sec:notations}

Below is a list of general notations that will be used.

\begin{itemize}

\item $\mathbb{D}=\{\zeta\in \mathbb{C}:|\zeta|< 1\}$,
$\mathbb{D}(a,r)=\{\zeta\in \mathbb{C}:|\zeta-a|< r\}$.

\item $\P=\C\cup\{\infty\}$, the Riemann sphere.

\item $dm= dm(z)=dx\wedge dy=\frac{1}{2\I}d\bar{z}\wedge dz$.
($z=x+\I y$).

\item $f^{*}(\zeta)=\overline{f(1/\bar{\zeta})}$.

\end{itemize}\par



\section{The Cauchy and exponential transforms}\label{sec:Cauchy}

The {\it logarithmic potential} of a measure $\mu$ with compact support in $\C$ is
\begin{equation}\label{potential}
U_\mu(z)=\int \log\frac{1}{|z-\zeta|} d\mu(\zeta),
\end{equation}
satisfying $-\Delta U_\mu=2\pi\mu$ in the sense of distributions. The gradient of $U_\mu$ can,
modulo a constant factor and a complex conjugation, be identified with the {\it Cauchy transform}
$$
C_\mu (z)=-\frac{2}{\pi}\frac{\partial U_\mu(z)}{\partial z}=-\frac{1}{\pi}\int \frac{d\mu (\zeta)}{\zeta-z},
$$
which may be naturally viewed as a differential: $C_\mu(z)dz$. From these potentials, or fields, $\mu$ can be recovered by
\begin{equation}\label{dbarCauchy}
\mu=\frac{\partial C_\mu}{\partial \bar z} =-\frac{1}{2\pi}\Delta U_\mu.
\end{equation}

We shall mainly deal with measures of the form $\mu=\chi_\Omega m$, where $m$ denotes
Lebesgue measure in $\C$ and $\Omega\subset\C$ is a bounded open set. Then we write $U_\Omega$, $C_\Omega$
in place of $U_\mu$ and $C_\mu$. On writing $C_\Omega$ as
$$
C_\Omega (z)=\frac{1}{2\pi\I}\int_\Omega \frac{d\zeta \wedge d\bar \zeta}{\zeta-z}=\frac{1}{2\pi\I}\int_\Omega \frac{d\zeta }{\zeta-z}
\wedge d\bar \zeta.
$$
one is naturally led to consider the more symmetric ``double Cauchy transform'',
\begin{equation}\label{doubleCauchy}
C_\Omega (z,w)=\frac{1}{2\pi\I}\int_\Omega\frac{d\zeta }{\zeta-z} \wedge \frac{d\bar \zeta}{\bar \zeta-\bar w},
\end{equation}
which is much richer than the original transform. It may be viewed as a double differential: $C_\mu(z,w)\,dz d\bar w$.
After exponentiation it gives the by now quite well
studied \cite{Carey-Pincus-1974}, \cite{Putinar-1994}, \cite{Putinar-1996},
\cite{Putinar-1998}, \cite{Gustafsson-Putinar-1998} {\it exponential transform} of $\Omega$:
$$
E_\Omega(z,w)=\exp {C_\Omega (z,w)}.
$$
The original Cauchy transform reappears when specializing one variable at infinity:
$$
C_\Omega(z)=\res_{w=\infty} C_\Omega (z,w)\,d\bar w =-\lim_{w\to \infty} {\bar w} C_\Omega(z,w).
$$
As a substitute for (\ref{dbarCauchy}) we have, for the double transform,
$$
\frac{\partial^2 C_\Omega(z,w)}{\partial \bar z\partial w} =-{\pi}\delta(z-w)\chi_\Omega(z)\chi_\Omega(w).
$$

Even the double Cauchy transform is not fully complete. It
contains the Cauchy kernel $\frac{d\zeta}{\zeta-z}$, which is a
meromorphic differential on the Riemann sphere with a pole at
$\zeta=z$, but it has also a pole at $\zeta=\infty$. It is natural
to make the latter pole visible and movable. This has the
additional advantage that one can avoid the two Cauchy kernels which
appear in the definitions of the double Cauchy transform, and the
exponential transform, to have coinciding poles (namely at infinity).
Thus one arrives naturally at the extended (or four variable) Cauchy and exponential
transforms:
\begin{equation}\label{extendedCauchy}
C_\Omega(z,w;a,b)=\frac{1}{2\pi\I}\int_\Omega(\frac{d\zeta }{\zeta-z} -\frac{d\zeta }{\zeta-a})
\wedge (\frac{d\bar \zeta}{\bar \zeta-\bar w}-\frac{d\bar \zeta}{\bar \zeta-\bar b}),
\end{equation}
\begin{equation}\label{extendedexp}
E_\Omega(z,w;a,b)=\exp { C_\Omega(z,w;a,b)}=\frac{E_\Omega(z,w)E_\Omega(a,b)}{E_\Omega(z,b)E_\Omega(a,w)}.
\end{equation}
Clearly $C_\Omega(z,w)=C_\Omega(z,w;\infty,\infty)$, $E_\Omega(z,w)=E_\Omega(z,w;\infty,\infty)$.
If the points $z$, $w$, $a$, $b$ are taken to be all distinct, then
both transforms are well defined and finite for any open set $\Omega$ in the Riemann sphere $\P$.
For example, $E_\P(z,w;a,b)$ turns out to be the modulus
squared of the cross-ratio:
\begin{equation}\label{crossratio}
E_\P(z,w;a,b)=|(z:a:w:b)|^2,
\end{equation}
where, according to the classical definition \cite{Ahlfors-1966},
\begin{equation}\label{cross}
(z:a:w:b)=\frac{(z-w)(a-b)}{(z-b)(a-w)}
\end{equation}
with the variables in this order.
To prove (\ref{crossratio}), one may use the formula for the two-variable exponential transform for the disk $\Omega=\D(0,R)$
when both variables are inside, namely (see \cite{Gustafsson-Putinar-1998})
$$
E_{\D(0,R)}(z,w)=\frac{|z-w|^2}{R^2-z\bar w} \quad (z,w\in \D(0,R)),
$$
insert this into the last member of (\ref{extendedexp}) and let $R\to\infty$. A different proof of (\ref{crossratio})
will be indicated in Section~\ref{sec:potential}.

We record also the formula for the exponential transform for an arbitrary disk $\D(a,R)$ when
both variables are outside ($z,w\in\C\setminus\D(a,R)$):
\begin{equation}\label{Edisk}
E_{\D(a,R)}(z,w)=1-\frac{R^2}{(z-a)(\bar w-\bar a)}.
\end{equation}
See again \cite{Gustafsson-Putinar-1998}.

From (\ref{crossratio}) it is immediate that for any domain $\Omega\subset\P$,
\begin{equation}\label{EEcrossratio}
E_\Omega(z,w;a,b)E_{\P\setminus\Omega}(z,w;a,b)=|(z:a:w:b)|^2
\end{equation}
(clearly $E_D$ makes sense even if the set $D$ is not open).
The two-variable exponential transform $E_{\P\setminus\Omega}(z,w)$ is identically zero if $\Omega$ is bounded, but
there is still a counterpart of (\ref{EEcrossratio}) in this case. It is
\begin{equation}\label{EHcrossratio}
E_\Omega(z,w)\cdot\frac{1}{H_{\Omega}(z,w)}=|z-w|^2,
\end{equation}
where $H_\Omega(z,w)$ is the {\em interior exponential transform}, which is a renormalized version of one over the
exponential transform of the exterior domain, more precisely
$$
H_\Omega(z,w)=\lim_{R\to\infty}\frac{1}{R^2E_{\D(0,R)\setminus\Omega}}.
$$
We consider this function only for $z,w\in\Omega$, and then it is analytic in $z$ and antianalytic in $w$.

When the integral in the definition of the exponential transform is transformed into a boundary integral it turns out that
the formulas for $E_\Omega(z,w)$ (with the variables outside) and $H_\Omega(z,w)$ become exactly the same:
\begin{align*}
E_\Omega(z,w)&= \exp\big[\frac{1}{\I\pi}\int_{\partial\Omega}\log|\zeta-z|\,d\overline{\log(\zeta-w)}\big] \quad (z,w \in \C\setminus\Omega),\\
H_\Omega(z,w)&= \exp\big[\frac{1}{\I\pi}\int_{\partial\Omega}\log|\zeta-z|\,d\overline{\log(\zeta-w)}\big]  \quad (z,w \in\Omega).
\end{align*}
Here one can easily identify the modulus and the angular part, for example for $E_\Omega(z,w)$:
\begin{align*}
|E_\Omega(z,w)|&= \exp\big[-\frac{1}{\pi}\int_{\partial\Omega}\log|\zeta-z|\,d{\arg(\zeta-w)}\big],\\
\arg E_\Omega(z,w)
&= -\frac{1}{\pi}\int_{\partial\Omega}\log|\zeta-z|\,d{\log|\zeta-w|}
\end{align*}
($z,w \in \C\setminus\Omega$).
These formulas open up for geometrical interpretations.

The exponential transform originally arose as a side product in the theory of hyponormal operators on a Hilbert space,
see \cite{Carey-Pincus-1974}, \cite{Martin-Putinar-1989}, \cite{Pincus-Xia-Xia-1984}, and the inner product in the Hilbert space then automatically forces the transform
to have certain positive definiteness properties, namely the two variable transforms $1/E_\Omega$ and $H_\Omega$
are positive definite (as kernels) in all $\C$, and $1-E_\Omega$ is positive semidefinite outside $\Omega$.  See \cite{Gustafsson-Putinar-2005}
for direct proofs. For the four variable exponential transform we similarly have, for example, that $1/E$ is positive definite
in the sense that
$$
\sum_{k,j} \frac{\lambda_k\overline{\lambda_j}}{E_\Omega (z_k, z_j;a_k, a_j)}\geq 0
$$
for any finite tuples $(z_j,a_j)\in \C^2$, $\lambda_j\in\C$, and with strict inequality unless all $\lambda_j$ are zero.

Aside from operator theory, the ideas of the exponential transform appear implicitly as a tool in the theory of boundaries
of analytic varieties, see \cite{Harvey-Lawson-1975}, \cite{Alexander-Wermer-1998} (Chapter 19).


\section{Harmonic, complex and exponential moments}\label{sec:moments}

The Cauchy and exponential transforms are generating functions for series of
moments of a bounded domain  $\Omega\subset\C$. Specifically we have the {\it complex moments}
$M_{kj}=M_{kj}(\Omega)$,
$$
M_{kj}=\frac{1}{\pi} \int_\Omega z^k \bar{z}^j \,dm(z)=\frac{1}{2\pi\I} \int_\Omega z^k \bar{z}^j \, d\bar{z}
d{z} \quad (k,j\geq 0),
$$
which in view of the Weierstrass approximation theorem completely determine $\Omega$
up to nullsets, and the more restricted {\it harmonic moments} $M_k=M_k(\Omega)$:
\begin{equation}\label{harmonicmoments}
M_k=M_{k0}=\frac{1}{\pi} \int_\Omega z^k  \,dm(z)=\frac{1}{2\pi\I} \int_\Omega z^k  \, d\bar{z} dz
\quad (k\geq 0).
\end{equation}
The latter do not completely determine the domain, not even in the simply connected case
(see \cite{Sakai-1978}). However, for simply connected domains with analytic boundaries they are
at least sensible for local variations of the domain, {\it i.e.}, we have a local
one-to-one correspondence
\begin{equation}\label{correspondence}
\Omega \leftrightarrow (M_0, M_1, M_2,\dots).
\end{equation}
A precise statement in this respect is the following: there exists a compact subset $K\subset\Omega$
such that any Jordan domain $D\supset K$ with $M_k(D)=M_k(\Omega)$ for all $k\geq 0$ necessarily
agrees with $\Omega$. See \cite{Gustafsson-1990}, Corollary~3.10, for a proof.

It is actually not {\it a priori} obvious that the moments $M_k$ ($k\geq 0$) are independent of each other
(for example, the complex moments $M_{kj}$ are definitely not independent, given that they are moments
of domains), but as will be discussed in Sections~\ref{sec:moment coordinates} and \ref{sec:deformation},
it is indeed possible to define deformations of $\Omega$ which change one of the $M_k$ without changing
the other ones. See also \cite{Cherednichenko-1996} and \cite{Ross-Witt-Nystrom-2012} for further discussions
and results.

The generating properties for the harmonic and complex  moments are
\begin{equation}\label{cauchy}
C_\Omega (z)=\sum_{k=0}^\infty \frac{M_k}{z^{k+1}},
\end{equation}
$$
C_\Omega (z,w)=\sum_{k,j\geq 0} \frac{M_{kj}}{z^{k+1}\bar{w}^{j+1}}.
$$
Finally we introduce the {\it exponential moments} $B_{kj}=B_{kj}(\Omega)$ having
$1-E(z,w)$ as generating function, hence being defined by
\begin{equation}\label{oneminusE}
1-\exp[-\sum_{k,j\geq 0} \frac{M_{kj}}{z^{k+1}\bar{w}^{j+1}}]  =\sum_{k,j\geq 0} \frac{B_{kj}}{z^{k+1}\bar{w}^{j+1}}.
\end{equation}
The positive semidefiniteness of $1-E(z,w)$ implies that also the infinite matrix $(B_{kj})$ is positive semidefinite.
The extended (four variable) exponential transform similarly generate some four index moments, see \cite{Putinar-1988},
\cite{Putinar-1990}, \cite{Putinar-1996}, \cite{Putinar-1998}, \cite{Martin-Putinar-1989}.

In the remaining part of this section we shall assume that $\Omega$ is a simply connected domain with $0\in\Omega$
and such that $\partial\Omega$ is an analytic curve (without singularities).
By writing (\ref{harmonicmoments}) as a boundary integral,
\begin{equation}\label{Mkboundary}
M_k=\frac{1}{2\pi\I} \int_{\partial\Omega} z^k  \, \bar{z} dz,
\end{equation}
the harmonic moments become meaningful also for $k<0$, and these can be considered as functions of the moments for $k\geq
0$, as far as local variations are concerned. Thus we may write
$$
M_{-k}=M_{-k}(\Omega)=M_{-k}(M_0, M_1, M_2,\dots)
$$
for $k>0$.

In a series of papers by I.~Krichever, A.~Marshakov,
M.~Mineev-Weinstein, P.~Wiegmann and A.~Zabrodin, for example
\cite{Wiegmann-Zabrodin-2000}, \cite{Mineev-Zabrodin-2001}, \cite{Marshakov-Wiegmann-Zabrodin-2002},
\cite{Krichever-Marshakov-Zabrodin-2005}, the so extended moment sequence has been
shown to enjoy remarkable integrability properties, for example (in
present notation)
\begin{equation}\label{M_k}
\frac{1}{k}\frac{\partial M_{-k}}{\partial M_j}
=\frac{1}{j}\frac{\partial M_{-j}}{\partial M_k}\quad (k,j\geq 1).
\end{equation}
This can be explained in terms of the presence of a
prepotential $\mathcal{F}(\Omega)$ such that
\begin{equation}\label{dprepotential}
\frac{1}{k}M_{-k}(\Omega) = \frac{\partial \mathcal{F}(\Omega)}{\partial M_k}.
\end{equation}
In fact, for any sufficiently large $R>0$ (it is enough that $\overline{\Omega}\subset\D(0,R)$),
the energy functional
\begin{equation}\label{prepotential}
\mathcal{F}(\Omega)=\mathcal{F}_R(\Omega)=
\frac{1}{\pi^2}\int_{\D(0,R)\setminus\Omega}\int_{\D(0,R)\setminus\Omega} \log
\frac{1}{|z-\zeta|} dm(z) dm({\zeta})
\end{equation}
serves as a prepotential.
The exponential of $\mathcal{F}(\Omega)$ can be identified with a certain
$\tau$-function which appears as a partition function in mathematical models in
statistical mechanics, see \cite{Kostov-Krichever-Mineev-Wiegmann-Zabrodin-2001}.
An indication of the proof of (\ref{dprepotential}), and hence (\ref{M_k}), will be given in Section~\ref{sec:deformation}.

\begin{remark}
The dependence of $M_{-k}$ and $\mathcal{F}(\Omega)$ on the $M_j$ is actually not analytic, so the above partial
derivatives are really Wirtinger derivatives:
$$
\frac{\partial}{\partial M_j}=\frac{1}{2}(\frac{\partial}{\partial \re M_j}-\I \frac{\partial}{\partial\im  M_j}),
$$
$$
\frac{\partial}{\partial \bar{M}_j}=\frac{1}{2}(\frac{\partial}{\partial \re M_j}+\I \frac{\partial}{\partial\im  M_j}).
$$
\end{remark}

On extending the power series (\ref{cauchy}) for  the Cauchy transform to a full Laurent series by means of the negative
moments one gets, at least formally, the {\em Schwarz function} of $\partial\Omega$:
\begin{equation}\label{schwarz}
S (z)=\sum_{k=-\infty}^\infty \frac{M_k}{z^{k+1}}.
\end{equation}
Even though the full series here need not converge anywhere it can be given the following precise meaning: the negative part of the series defines a germ of an analytic function at the origin and the positive part a germ of an analytic function at infinity.
When $\partial\Omega$ is analytic the domains of analyticity of these two analytic functions overlap, and the overlap region contains
$\partial\Omega$. Thus $S(z)$, as written above, makes sense and is analytic in a neighborhood of $\partial\Omega$.
Moreover, it satisfies
\begin{equation}\label{Szbarz}
S(z)=\bar z \quad \text{for\,\,} z\in\partial\Omega.
\end{equation}
This can be seen by realizing that the positive part of the power series (\ref{schwarz}) defines the Cauchy transform $C_\Omega(z)$, which can be written, for large $z$,
$$
\sum_{k=0}^{\infty} \frac{M_k}{z^{k+1}}=-\frac{1}{2\pi\I}\int_{\partial\Omega}\frac{\bar\zeta\,d\zeta}{\zeta-z}=C_\Omega (z).
$$
Similarly, the negative part of the series agrees with the power series expansion of a boundary integral which can be interpreted as
the Cauchy transform of the exterior domain:
$$
\sum_{k=-\infty}^{-1} \frac{M_k}{z^{k+1}}=-\frac{1}{2\pi\I}\int_{\partial(\P\setminus\Omega)}\frac{\bar\zeta\,d\zeta}{\zeta-z}
=C_{\P\setminus\Omega}(z)
$$
for $z$ small. Now (\ref{Szbarz}) follows from a well-known jump formula for Cauchy integrals.


\section{Finitely determined domains}\label{sec:finitely}

In order to exhibit transparent links between some of the previously defined objects we shall now work in a basically
algebraic framework. This will mean that we shall work with classes of domains involving only
finitely many parameters, specifically {\it finitely determined domains} in the
terminology of \cite{Putinar-1994}, in other words {\it quadrature domains} \cite{Aharonov-Shapiro-1976},
\cite{Sakai-1982}, \cite{Shapiro-1992}, \cite{Gustafsson-Shapiro-2005}
or {\it algebraic domains} \cite{Varchenko-Etingof-1992}.
In this section our domains need not be simply connected, they will from outset be just bounded
domains in the complex plane (bounded open set would work equally well).

There are many definitions or equivalent characterizations of quadrature domains, the most straightforward
in the context of the present paper perhaps being the following:
a bounded domain $\Omega$ in the complex plane is a (classical) {quadrature domain} (or {algebraic domain}),
if the exterior Cauchy transform is a rational function, {\it i.e.}, if there exists a rational function $R(z)$
such that
\begin{equation}\label{Crational}
C_\Omega(z)=R(z) \quad \text{for all} \,\,z\in \C\setminus\Omega.
\end{equation}

Below is a list of equivalent requirements on a bounded domain $\Omega\subset\C$.
Strictly speaking, the last three items, c)-e), are insensitive for changes of $\Omega$ by
nullsets, but one may achieve equivalence in the pointwise sense by requiring that the
domain $\Omega$ considered is complete with respect to area measure $m$, {\it i.e.}, that all points $a\in\C$
such that $m(\D(a,\varepsilon)\setminus \Omega)=0$ for some $\varepsilon>0$ have been adjoined to $\Omega$.

\begin{itemize}

\item[{a)}]
There exist finitely many points $a_k\in \Omega$ and coefficients $c_{kj}\in \C$ such that
\begin{equation}\label{QD}
\int_\Omega h\, dm =\sum_{k=1}^m\sum_{j=0}^{n_k-1} c_{kj} h^{(j)}(a_k)
\end{equation}
for every function $h$ which is integrable and analytic in $\Omega$.
This is the original definition of quadrature domain, used in \cite{Aharonov-Shapiro-1976} for example.

\item[{b)}] There exists a meromorphic function $S(z)$ in $\Omega$, extending continuously to $\partial\Omega$ with
\begin{equation}\label{Szz}
S(z)=\bar{z} \quad \text{for \,} z\in\partial\Omega.
\end{equation}
This $S(z)$ will be the Schwarz function of $\partial\Omega$ \cite{Davis-1974},
\cite{Shapiro-1992}.

\item[{c)}] The exponential transform $E_{\Omega}(z,w)$ is, for $z, w$ large,
a rational function of the form
\begin{equation}\label{e1}
E_\Omega(z,w)=\frac{Q(z, \bar w)}{P(z)\overline{P(w)}},
\end{equation}
where $P$ and $Q$ are polynomials in one and two variables, respectively.

\item[{d)}]
$\Omega$ is determined by a finite sequence of complex moments $(M_{kj})_{0\leq k,j\leq N}$
(see Example~\ref{ex:determined} below for a clarification of the meaning of this).

\item[{e)}] For some positive integer $N$ there holds
\begin{equation*}\label{b-deg}
\det(B_{kj})_{0\leq k,j\leq N} = 0.
\end{equation*}

\end{itemize}

Basic references for c)-e) are \cite{Putinar-1994}, \cite{Putinar-1995}, \cite{Putinar-1996}, \cite{Gustafsson-Putinar-1998}.

When the above conditions hold then the minimum possible number $N$ in d) and {e}), the degree of $P$ in c) and the number of poles
(counting multiplicities) of $S(z)$ in $\Omega$,
all coincide with the {\it order} of the quadrature domain, {\it i.e.}, the number $N=\sum_{k=1}^{m}n_k$ in (\ref{QD}). For $Q$, see more precisely below.

If $\Omega$ is simply connected, the above conditions a)-e) are also equivalent to that any conformal map $f:\D\to\Omega$ is a rational
function. If $\Omega$ is multiply connected, then it is necessarily finitely connected (in fact, the boundary will, up to finitely many points,
be exactly the algebraic curve $Q(z,\bar z)=0$ with $Q$ as in c) above), and there is a beautiful extension, due to
D. Yakubovich, of the aforementioned statement: $\partial\Omega$ is traced out (on $\partial\D$) by the eigenvalues of a
rational normal matrix function $F:\D\to \C^{N\times N}$ which is holomorphic in $\D$; see \cite{Yakubovich-2006} for details.

The positive definiteness properties of the exponential transform (see end of Section~\ref{sec:Cauchy})
imply that when $\Omega$ is a quadrature domain of order
$N$ then $Q(z,w)$ admits the following representation \cite{Gustafsson-Putinar-2000}:
\begin{equation*}
\begin{split}
Q(z,\bar w)=P(z)\overline{P(w)}-\sum_{k=0}^{N-1}P_k(z)\overline{P_k(w)}.
\end{split}
\end{equation*}
Here each $P_k(z)$ is a polynomial of degree $k$ exactly. Recall that $P(z)=P_N(z)$ has degree $N$, and one
usually normalizes it to be monic (thereby also making $Q(z,\bar w)$ uniquely determined).
Thus the rational form (\ref{e1}) of the exponential transform can be expressed as
$$
1-E_\Omega(z,w)=\sum_{k=0}^{N-1}\frac{P_k(z)\overline{P_k(w)}}{P(z)\overline{P(w)}}.
$$
With the same polynomials, the rational form (\ref{Crational}) of the Cauchy transform is precisely
$$
C_\Omega(z)=\frac{P_{N-1}(z)}{P(z)}.
$$
Note however that the double Cauchy transform $C_\Omega(z,w)$ will not be rational, only its exponential will be.

\begin{example}\label{ex:determined}
The following example should clarify the meaning of a domain $\Omega$ being finitely determined (determined by finitely
many moments $M_{kj}$), as in the equivalent condition d) above. Suppose that, by some measurements, we happen to know
the moments $M_{00}$, $M_{10}$, $M_{01}$, $M_{11}$, but we do not know from which domain they come. Generally speaking,
there will be infinitely many domains whose moment sequence starts out with just these four moments, unless of course the data
are contradictory so that no domain has them. However, for exceptional choices of the data it happens that these four
moments (or, in general, finitely many moments) determine the domain uniquely. This happens exactly for quadrature domains.

Assume for example that
\begin{equation}\label{fourM}
M_{00}=M_{10}=M_{01}=4, \quad M_{11}=12,
\end{equation}
and let us try to compute the exponential moments $B_{kj}$ as far as possible. We have
$$
1-\exp\big[-\sum_{k,j\geq 0} \frac{M_{kj}}{z^{k+1}\bar{w}^{j+1}}\big]
=1-\exp\big[- \frac{{4}}{{z}{\bar{w}}}- \frac{4}{z^{2}\bar{w}}- \frac{4}{z\bar{w}^{2}}- \frac{12}{z^{2}\bar{w}^{2}}-\dots\big]
$$
$$
=\frac{4}{{z}\bar{w}}+ \frac{4}{z^{2}\bar{w}}+ \frac{4}{z\bar{w}^{2}}+ \frac{12}{z^{2}\bar{w}^{2}}+\dots
-\frac{1}{2!}(-\frac{4}{{z}\bar{w}}-\dots)^2-\frac{1}{3!}(-\frac{4}{{z}\bar{w}}-\dots)^3-\dots
$$
$$
=\frac{4}{{z}\bar{w}}+ \frac{4}{z^{2}\bar{w}}+ \frac{4}{z\bar{w}^{2}}+ \frac{4}{z^{2}\bar{w}^{2}}+\text{higher order terms}.
$$
Thus we see that
$$
B_{00}=B_{10}=B_{01}=B_{11}=4.
$$

Clearly the determinant
$\det(B_{kj})_{0\leq k,j\leq N}$ vanishes for $N=1$. The theory (equivalent condition e) above) then
tells that the moments come from a quadrature domain of order one, {\it i.e.}, a disk. To determine which disk one uses
(\ref{oneminusE}) together with (\ref{e1}). Arguing in general one first realizes that
\begin{equation}\label{PPB}
P(z)\overline{P(w)}\sum_{k,j\geq 0} \frac{B_{kj}}{z^{k+1}\bar{w}^{j+1}}=P(z)\overline{P(w)}-Q(z,\bar w),
\end{equation}
in particular the left member is a polynomial. Setting $P(z)=\sum_{j=0}^N \alpha_j z^j$ and chosing $\alpha_N=1$ (normalization),
the vanishing of the coefficient of $1/z\bar w$ in the left member gives
$$
\sum_{k,j=0}^N \alpha_j\bar\alpha_k B_{kj}=0.
$$
This, by the way, explains the condition $\det(B_{kj})_{0\leq k,j\leq N}=0$. It also gives the equation
$$
\sum_{j=0}^N  B_{kj}\alpha_j=0 \quad (k=0,\dots, N)
$$
for the coefficients of $P(z)$.
In our particular example this equation gives $\alpha_0=-\alpha_1=-1$, hence
$$
P(z)=z-1.
$$

Next, $Q(z,\bar w)$ is easily obtained from (\ref{PPB}):
$$
Q(z,\bar w)=P(z)\overline{P(w)}-P(z)\overline{P(w)}\sum_{k,j\geq 0} \frac{B_{kj}}{z^{k+1}\bar{w}^{j+1}}
$$
$$
=(z\bar w-z-\bar w+1)-(z\bar w-z-\bar w+1)(\frac{4}{{z}\bar{w}}+ \frac{4}{z^{2}\bar{w}}+ \frac{4}{z\bar{w}^{2}}+ \frac{4}{z^{2}\bar{w}^{2}}+\dots)
$$
$$
=z\bar w-z-\bar w+1-4 +\text{terms containing negative powers of $z$ or $\bar w$}
$$
$$
=z\bar w-z-\bar w-3= (z-1)(\bar w -1)-4.
$$
Thus we have identified $\Omega$ as the disk
$$
\Omega=\{z\in\C:|z-1|^2<4\}.
$$
No other domain (or open set) has the first four moments given by (\ref{fourM}).

\end{example}


\section{The meromorphic resultant and the elimination function}\label{sec:resultant}

In this section we review the definitions of the meromorphic resultant and the
elimination function, as introduced in \cite{Gustafsson-Tkachev-2009},  referring to that paper
for any details.
If $f$ is a meromorphic function on any compact Riemann surface $M$ we denote by $(f)$ its divisor of zeros and poles,
symbolically $(f)=f^{-1}(0)-f^{-1}(\infty)$ (written in additive form). If $D$ is any divisor and $g$ is any function
we denote by $\scal{D}{g}$ the additive action of $D$ on $g$, and by
$g(D)$ the multiplicative action of $g$ on $D$. See the example below for clarification.

Now the {\it meromorphic resultant} between $f$ and $g$ is, by definition,
$$
\Res (f,g)=g((f))=e^{\scal{(f)}{\log g}}
$$
whenever this makes sense (the resultant is undefined if expressions like $0\cdot\infty$, $\frac{0}{0}$, $\frac{\infty}{\infty}$
appear when the expression in the middle member is evaluated).
In the last expression, $\log g$ refers to arbitrarily chosen local branches of the logarithm.
When defined, $\Res (f,g)$ is either a complex number or $\infty$.
As a consequence of the Weil reciprocity theorem \cite{Weil-1979} the resultant is symmetric:
$$
\Res (f,g)=\Res (g,f).
$$

\begin{example}\label{ex:Res}
To illustrate and explain the above definitions we spell out what they look like for the sample divisor
$$
D=1\cdot (a)+1\cdot (b)-2\cdot (c).
$$
Here $a,b,c\in M$ (any compact Riemann surface) and $g$ is a function on $M$ (typically a meromorphic function,
but this is not absolutely necessary as for the definitions). It is possible to think of
$D$ as a $0$-chain and of $g$ as a $0$-form. Then the above definitions amount to
\begin{equation}\label{additive}
\scal{D}{g}=1\cdot g(a)+1\cdot g(b)-2\cdot g(c)  =\int_D g,
\end{equation}
\begin{equation}\label{multiplicative}
g(D)=e^{\scal{D}{\log g}}=\frac{g(a)g(b)}{g(c)^2}.
\end{equation}
We can associate to $D$ the $2$-form current
\begin{equation}\label{muD}
\delta_D dx\wedge dy =\delta_a dx\wedge dy+\delta_b dx\wedge dy-2\delta_c dx\wedge dy,
\end{equation}
where $\delta_a dx\wedge dy$ denotes the Dirac measure (point mass) at $a$ considered as a $2$-form (so that it acts on
functions by integration over $M$). Then (\ref{additive}) can be augmented to
$$
\scal{D}{g}=\int_D g =\int_M g\delta_D dx\wedge dy.
$$

If $f$ is a meromorphic function with divisor $(f)=D$, then the above $g(D)$
(in (\ref{multiplicative})) is also the resultant $\Res (f,g)=g((f))$.
For example, if $M=\P$ and
$$
f(z)=\frac{(z-a)(z-b)}{(z-c)^2},
$$
then
$$
\Res (f,g)=g((f))=\frac{g(a)g(b)}{g(c)^2}.
$$

It is also possible to let functions of two (or more) variables act on divisors. For example, if $g(z,w)$ is such
a function and $D_1$, $D_2$ are two divisors, say $D_1=1\cdot (a)+1\cdot (b)-2\cdot (c)$, $D_2=3\cdot (p)-3\cdot (q)$, then
(by definition)
\begin{equation}\label{double}
g(D_1, D_2)=\exp\scal{D_1\otimes D_2}{\log g}=\frac{g(a,p)^3 g(b,p)^3 g(c,q)^6}{g(a,q)^3 g(b,q)^3 g(c,p)^6}.
\end{equation}

\end{example}

\bigskip

Next the {\it elimination function} between two meromorphic functions, $f$ and $g$, on $M$ is defined by
$$
\mathcal{E}_{f,g}(z,w)=\Res(f-z,g-w),
$$
where $z,w\in\C$ are parameters. It is always a rational function in $z$ and $w$,
more precisely of the form
\begin{equation}\label{QPR}
\mathcal{E}_{f,g}(z,w)=\frac{Q(z,w)}{P(z)R(w)},
\end{equation}
where $Q$, $P$ and $R$ are polynomials,
and it embodies the necessary (since $M$
is compact) polynomial relationship between $f$ and $g$:
$$
\mathcal{E}_{f,g}(f(\zeta),g(\zeta))=0 \quad (\zeta\in M).
$$
There is also an extended elimination function, defined by
$$
\mathcal{E}_{f,g}(z,w; a, b)=\Res(\frac{f-z}{f-a},\frac{g-w}{g-b}).
$$

To relate the elimination function to the exponential transform one
need integral formulas for the elimination function. For this purpose we shall make some
computations within the framework of currents (distributional differential forms) on $M$.
As a first issue, if $f$ is a meromorphic function in $M$ we need to make its logarithm a single-valued
function almost everywhere on $M$. This is done as usual by making suitable `cuts' on $M$, and by choosing
a branch of the logarithm, call it $\Log f$,
on the remaining set $M\setminus (\text{cuts})$. Its differential in the sense of currents will then be of the form
\begin{equation}\label{dLogf}
d\Log f = \frac{df}{f}+\text{distributional contributions on the cuts}.
\end{equation}
If there is a cut along the $x$-axis, for example, the distributional contribution will be $dy$ times a
measure on the cut, that measure being arc length measure times the size of the jump
(an integer multiple of $2\pi\I$) over the cut, between the two branches of the logarithm.

The next issue is the identity
$$
\frac{1}{2\pi\I}d(\frac{df}{f})=\delta_{(f)}dx\wedge dy.
$$
See \cite{Gustafsson-Tkachev-2009} for the simple proof.

Now, proceeding formally we transform the resultant to an integral in the following series of steps:
$$
\Res (f,g)=g((f))=e^{\scal{(f)}{\Log g}}= \exp\int_{(f)}\Log g
$$
$$
=\exp\big[\int_{M}\Log g\,  \delta_{(f)}dx\wedge dy ]=\exp[\frac{1}{2\pi\I}\int_{M}\Log g  \,d(\frac{df}{f}) \big]
$$
$$
=\exp\big[\frac{1}{2\pi\I}\int_{M} \frac{df}{f}\wedge d\Log g \big].
$$
It should be remarked that, in the final integral, the only contributions come from the jumps of $\Log g$
(the last term in (\ref{dLogf}) when stated for $\Log g$) because outside this set of discontinuities the integrand contains $d\zeta\wedge d\zeta=0$ as a factor.
Similarly one gets, for the elimination function,
\begin{equation}\label{elimint}
\mathcal{E}_{f,g}(z,w)= \exp\big[\frac{1}{2\pi \I}\int_M
\frac{df}{f-z}\wedge d\, \Log ({g-w})\big].
\end{equation}

Now we shall apply (\ref{elimint}) in the case that $M$ is the Schottky double of a plane domain $\Omega$,
and connect it to the exponential transform of $\Omega$. So let $\Omega$ be a finitely connected
plane domain with analytic boundary or, more generally, a bordered Riemann surface and let
\begin{equation*}
M=\widehat\Omega=\Omega \cup\partial\Omega\cup\widetilde{\Omega}
\end{equation*}
be the {\it Schottky double} of $\Omega$, {\it i.e.}, the compact Riemann surface
obtained by completing $\Omega$ with a backside with the opposite
conformal structure, the two surfaces glued together along
$\partial\Omega$ (see \cite{Farkas-Kra-1992}, for example). On
$\widehat\Omega$ there is a natural anticonformal involution $J:\widehat\Omega\to \widehat\Omega$
exchanging corresponding points
on $\Omega$ and $\widetilde{\Omega}$ and having $\partial\Omega$ as fixed
points.

Let $f$ and $g$ be two meromorphic functions on $\widehat\Omega$.
Then
\begin{equation*}
f^*= \overline{ (f\circ J}),\quad g^*= \overline{ (g\circ J})
\end{equation*}
are also meromorphic on $\widehat\Omega$.

\begin{theorem}\label{th:eliminationsymmetric}
With $\Omega$, $\widehat{\Omega}$, $f$, $g$  as above, assume in
addition that $f$ has no poles in $\Omega\cup\partial\Omega$ and
that $g$ has no poles in $\widetilde{\Omega}\cup\partial\Omega$. Then, for large $z$, $w$,
\begin{equation*}
\mathcal{E}_{f,g}(z,\bar{w})= \exp\big[\frac{1}{2\pi \I}\int_\Omega
\frac{df}{f-z}\wedge
\frac{d\overline{g^*}}{\overline{g^*}-{\overline{w}}}\big].
\end{equation*}
In particular,
\begin{equation*}
\mathcal{E}_{f,f^*}(z,\bar{w})= \exp\big[\frac{1}{2\pi \I}\int_\Omega
\frac{df}{f-z}\wedge
\frac{d \overline{f}}{\overline{f}-\overline{w}}\big].
\end{equation*}
\end{theorem}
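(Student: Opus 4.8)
The plan is to start from the integral representation (\ref{elimint}) of the elimination function, collapse it to $\Omega$ by means of the involution, and then evaluate the relevant integral directly by Stokes, residues, and the symmetry of the resultant. First I would fix $z,w$ large and record where the singularities sit. Since $f$ has no poles on $\Omega\cup\partial\Omega$, for $|z|$ large the function $f-z$ is holomorphic and zero-free on a neighbourhood of $\overline\Omega$, so $\Log(f-z)$ is single-valued and holomorphic there and all zeros and poles of $f-z$ lie in $\widetilde\Omega$. Dually, the assumption that $g$ has no poles on $\widetilde\Omega\cup\partial\Omega$ forces all zeros and poles of $g-\bar w$ into $\Omega$ for $|w|$ large; equivalently $g^{*}=\overline{g\circ J}$ is holomorphic on a neighbourhood of $\overline\Omega$, so $\overline{g^{*}}-\bar w$ is zero-free there and $\Log(\overline{g^{*}}-\bar w)=\overline{\Log(g^{*}-w)}$ is single-valued with no cuts.

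Next I would note that in (\ref{elimint}), taken at the parameter value $\bar w$, the integral over the double collapses to $\Omega$. The cuts of $\Log(g-\bar w)$ may be taken inside $\Omega$, so on $\widetilde\Omega$ we have simply $d\Log(g-\bar w)=\tfrac{dg}{g-\bar w}$, a smooth $(1,0)$-form; wedged with the $(1,0)$-form $\tfrac{df}{f-z}$ it vanishes identically. Hence $\int_{\widehat\Omega}=\int_\Omega$ and $\mathcal{E}_{f,g}(z,\bar w)=\exp\!\big[\frac{1}{2\pi\I}\int_\Omega \frac{df}{f-z}\wedge d\Log(g-\bar w)\big]$.

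The heart of the matter is to show that the target integral $B:=\int_\Omega \frac{df}{f-z}\wedge\frac{d\overline{g^{*}}}{\overline{g^{*}}-\bar w}$ exponentiates to the same value, and I would compute $B$ directly rather than try to transform the previous $\Omega$-integral term by term. Writing $\frac{df}{f-z}=d\Log(f-z)$ and $\frac{d\overline{g^{*}}}{\overline{g^{*}}-\bar w}=d\Log(\overline{g^{*}}-\bar w)$ with both logarithms single-valued on $\overline\Omega$, and using that $\Log(\overline{g^{*}}-\bar w)$ has no zeros, poles, or cuts in $\Omega$ (so $d\,d\Log(\overline{g^{*}}-\bar w)=0$ there), Stokes gives $B=\int_{\partial\Omega}\Log(f-z)\,d\Log(\overline{g^{*}}-\bar w)$. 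On $\partial\Omega$ the involution fixes every point, so $g^{*}=\bar g$, i.e. $\overline{g^{*}}=g$, and the boundary form $d\Log(\overline{g^{*}}-\bar w)$ coincides with $\tfrac{dg}{g-\bar w}$; thus $B=\int_{\partial\Omega}\Log(f-z)\tfrac{dg}{g-\bar w}$, a meromorphic form whose only singularities inside $\Omega$ are at the zeros and poles of $g-\bar w$. The residue theorem then yields $B=2\pi\I\,\scal{(g-\bar w)}{\Log(f-z)}$, so that $\exp\!\big[\frac{1}{2\pi\I}B\big]=(f-z)\big((g-\bar w)\big)=\Res(g-\bar w,\,f-z)$. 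Finally the symmetry $\Res(\phi,\psi)=\Res(\psi,\phi)$ (Weil reciprocity) identifies this with $\Res(f-z,\,g-\bar w)=\mathcal{E}_{f,g}(z,\bar w)$, which is the claim.

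The step I expect to be the main obstacle is precisely the one I have arranged to sidestep: one cannot naively replace $d\Log(g-\bar w)$ by the smooth $(0,1)$-form $\frac{d\overline{g^{*}}}{\overline{g^{*}}-\bar w}$ in the interior of $\Omega$, because $\Log(g-\bar w)$ carries branch cuts there (the zeros and poles of $g-\bar w$ lie in $\Omega$), and the distributional content of those cuts feeds into $d\,d\Log(g-\bar w)$; this is exactly the bookkeeping that, if mishandled, produces a spurious factor. Routing the computation through the cut-free primitive $\Log(\overline{g^{*}}-\bar w)$ together with the boundary identity $\overline{g^{*}}=g$ avoids this entirely, and agreement of the two $\Omega$-integrals then follows automatically from the localization above and the evaluation just performed. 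The stated special case is immediate: taking $g=f^{*}$ gives $g^{*}=(f^{*})^{*}=f$ (since $J^{2}=\mathrm{id}$), so $\overline{g^{*}}=\overline f$ and the formula becomes $\mathcal{E}_{f,f^{*}}(z,\bar w)=\exp\!\big[\frac{1}{2\pi\I}\int_\Omega\frac{df}{f-z}\wedge\frac{d\overline f}{\overline f-\overline w}\big]$; here the hypothesis on $f$ alone guarantees that $f^{*}$ has no poles on $\widetilde\Omega\cup\partial\Omega$.
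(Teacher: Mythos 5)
Your argument is correct, and it reaches the result by a route that differs from the paper's in its second half. The paper starts from the integral formula (\ref{elimint}) for $\mathcal{E}_{f,g}$ over the double, localizes to $\Omega$, applies Stokes to land on $\partial\Omega$, substitutes $g=\overline{g^*}$ there, and then applies Stokes a second time to pull the boundary integral back into $\Omega$ as the claimed $(1,0)\wedge(0,1)$ integral; the crucial first Stokes step is carried out on the current $d\Log(g-\bar w)$, whose cuts lie inside $\Omega$, so the jump contributions have to be absorbed correctly. You instead evaluate the right-hand side directly: Stokes with the cut-free primitive $\Log(\overline{g^*}-\bar w)$, the boundary identity $\overline{g^*}=g$, the residue theorem at the divisor of $g-\bar w$ (which sits in $\Omega$ for large $w$), and finally the symmetry $\Res(f-z,g-\bar w)=\Res(g-\bar w,f-z)$ to recover the elimination function from its definition as a resultant. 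In effect you traverse the same chain backwards and close it with Weil reciprocity rather than with (\ref{elimint}); what this buys is exactly what you point out — all distributional bookkeeping of the branch cuts of $\Log(g-\bar w)$ inside $\Omega$ is avoided — at the cost of invoking the symmetry of the meromorphic resultant, which the paper has in any case already recorded. Your justification of single-valuedness of the various logarithms for $|z|,|w|$ large, and the treatment of the special case $g=f^*$ via $(f^*)^*=f$, both match the paper's intent and are sound.
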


\begin{proof}
For the divisors of $f-z$ and $g-w$ we have, if $z,w$ are large
enough, $\supp(f-z)\subset\widetilde{\Omega}$, $\supp(g-w)\subset{\Omega}$.
Moreover, $\log (g-w)$ has a single-valued branch, $\Log(g-w)$, in
$\widetilde{\Omega}$.
Using that $g=\overline{g^*}$ on $\partial\Omega$ we therefore get, starting with (\ref{elimint}),
\begin{equation*}
\begin{split}
\mathcal{E}_{f,g}(z,\bar{w})
&=\exp\big[\frac{1}{2\pi \I}\int_{\widehat{\Omega}} \frac{df}{f-z}\wedge d\,
\Log (g-\bar{w})\big]\\
&= \exp\big[\frac{1}{2\pi \I}\int_{{\Omega}}
\frac{df}{f-z}\wedge d\, \Log (g-\bar{w})\big]\\
&= \exp\big[-\frac{1}{2\pi \I}\int_{\partial\Omega}
\frac{df}{f-z}\wedge  \Log (g-\bar{w})\big]\\
&= \exp\big[-\frac{1}{2\pi \I}\int_{\partial\Omega}
\frac{df}{f-z}\wedge  \Log (\overline{g^*}-\bar{w})\big]\\
&=\exp\big[\frac{1}{2\pi\I}\int_\Omega \frac{df}{f -z}\wedge
\frac{d\overline{g^*}}{\overline{g^*}-\bar{w}}\big].
\end{split}
\end{equation*}
as claimed.
\end{proof}

We next specialize to the case that $\Omega$ is a quadrature domain.
Let $S(\zeta)$ be the Schwarz function of $\Omega$.
Then the relation \eqref{Szz} can be interpreted as saying that the
pair of functions $S(\zeta)$ and $\bar{\zeta}$ on $\Omega$ combines into a
meromorphic function on the Schottky double
$\widehat\Omega$ of $\Omega$, namely
that function $g$ which equals $S(\zeta)$ on $\Omega$, and equals $\bar{\zeta}$ on $\widetilde{\Omega}$.

The function $f=g^*=\overline{g\circ J}$ is then
represented by the opposite pair: $\zeta$ on $\Omega$, $\overline{S(\zeta)}$ on $\widetilde{\Omega}$.
Therefore, on $\Omega$ we have $f(\zeta)=g^*(\zeta)=\zeta$, and  Theorem~\ref{th:eliminationsymmetric} immediately gives
\begin{corollary}\label{th:m}
For any quadrature domain $\Omega$,
\begin{equation}\label{expelim}
E_\Omega (z,w) = \mathcal{E}_{f,f^*}(z,\bar{w}) \qquad
(|z|,|w|\gg 1),
\end{equation}
where $f$, $f^*$ are the two meromorphic functions on $\widehat{\Omega}$
given on $\Omega$ by $f(\zeta)=\zeta$, $f^*(\zeta)=S(\zeta)$.
\end{corollary}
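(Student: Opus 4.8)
The plan is to obtain the corollary as a direct specialization of Theorem~\ref{th:eliminationsymmetric}, the only real work being the correct choice of meromorphic functions on the Schottky double $\widehat\Omega$. First I would invoke the quadrature-domain hypothesis in the form of condition~b): the Schwarz function $S(\zeta)$ is meromorphic in $\Omega$ and satisfies $S(\zeta)=\bar\zeta$ on $\partial\Omega$. This boundary relation is exactly the compatibility needed to glue $S(\zeta)$ on $\Omega$ and $\bar\zeta$ on $\widetilde\Omega$ into one meromorphic function $g$ on $\widehat\Omega$, as noted just before the corollary. Its reflection $f=g^{*}=\overline{g\circ J}$ is then meromorphic on $\widehat\Omega$, equal to $\zeta$ on $\Omega$ and to $\overline{S(\zeta)}$ on $\widetilde\Omega$; correspondingly $f^{*}=g$ equals $S(\zeta)$ on $\Omega$. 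These are precisely the two functions named in the statement.

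Next I would verify that the hypotheses of Theorem~\ref{th:eliminationsymmetric} hold for this pair. Since $f(\zeta)=\zeta$ on $\Omega\cup\partial\Omega$, the function $f$ has no poles there, which is the first hypothesis. For the ``in particular'' specialization $g=f^{*}$ one needs $f^{*}$ to have no poles in $\widetilde\Omega\cup\partial\Omega$; but on $\widetilde\Omega$ the function $f^{*}=g$ equals $\bar\zeta$, which is pole-free, so this holds as well. The poles of $S$ inside $\Omega$ do no harm, because the hypotheses constrain $f$ only on the $\Omega$-side and $f^{*}$ only on the $\widetilde\Omega$-side. The theorem's requirement ``for large $z,w$'' matches the range $|z|,|w|\gg 1$ asserted in the corollary, since that largeness is exactly what places $\supp(f-z)$ in $\widetilde\Omega$ and $\supp(g-w)$ in $\Omega$ in the proof of the theorem.

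With the hypotheses confirmed, the ``in particular'' formula of Theorem~\ref{th:eliminationsymmetric} reads
$$
\mathcal{E}_{f,f^{*}}(z,\bar w)=\exp\Big[\frac{1}{2\pi\I}\int_\Omega \frac{df}{f-z}\wedge\frac{d\bar f}{\bar f-\bar w}\Big].
$$
I would then substitute the explicit value $f(\zeta)=\zeta$, valid on all of $\Omega$, so that $df=d\zeta$, $\bar f=\bar\zeta$ and $d\bar f=d\bar\zeta$. The integral collapses to
$$
\frac{1}{2\pi\I}\int_\Omega \frac{d\zeta}{\zeta-z}\wedge\frac{d\bar\zeta}{\bar\zeta-\bar w},
$$
which is exactly the double Cauchy transform $C_\Omega(z,w)$ of (\ref{doubleCauchy}). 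Exponentiating gives $E_\Omega(z,w)=\exp C_\Omega(z,w)$, which is the desired identity~(\ref{expelim}).

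The step carrying the only genuine subtlety is the identification $f(\zeta)=\zeta$ on $\Omega$: one must track the anticonformal involution $J$ carefully to confirm that $g^{*}$ restricts to $\zeta$ on the front side, and not to $\bar\zeta$ or $\overline{S(\zeta)}$. Once that bookkeeping is settled, the corollary is a one-line substitution into Theorem~\ref{th:eliminationsymmetric}, with no further analytic difficulty, since the nontrivial passage from the area integral over $\Omega$ to the boundary and back was already carried out in the proof of that theorem.
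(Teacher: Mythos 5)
Your proposal is correct and follows exactly the paper's own route: glue $S(\zeta)$ on $\Omega$ with $\bar\zeta$ on $\widetilde\Omega$ into a meromorphic $g$ on $\widehat\Omega$, observe that $f=g^{*}$ restricts to $\zeta$ on $\Omega$, and specialize Theorem~\ref{th:eliminationsymmetric} so that the integral collapses to the double Cauchy transform. The paper states this as an immediate consequence; your additional verification of the pole hypotheses is sound and just makes explicit what the paper leaves implicit.
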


An alternative way of conceiving the corollary is to think of $f$ being
defined on an independent surface $W$, so that $f:W\to \Omega$ is a
conformal map. Then $\Omega$ is a quadrature domain if and only if
$f$ extends to a meromorphic function of the Schottky double
$\widehat{W}$, and the assertion of the corollary then is that the exponential
transform of $\Omega$ is given by (\ref{expelim}),
with the elimination function in the right member now taken in
$\widehat{W}$.

If $\Omega$ is simply connected we may choose $W=\D$, so that
$\widehat{W}=\mathbb{P}$ with involution $J:\zeta\mapsto
1/\bar{\zeta}$. Then $f:\D\to \Omega$ is a rational function
when (and only when) $\Omega$ is a quadrature domain, hence we
conclude that $E_\Omega (z,w)$ in this case is the elimination
function for two rational functions, $f(\zeta)$ and
$f^*(\zeta)=\overline{f(1/\bar{\zeta}})$.

Finally we want to illustrate the effectiveness of the notation (\ref{double}) by giving a formula for how
the exponential transforms under  rational conformal maps.

\begin{theorem}\label{TH1}
Let $\Omega$ be bounded domain in the complex plane and let $F$ be a
rational function which is bounded and one-to-one on  $\Omega$.
Then for $z,w\in\C{}\setminus\overline{F(\Omega)}$ we have
\begin{equation}\label{www}
E_{F(\Omega)}(z,w)=E_{\Omega}((F-z),(F-w)).
\end{equation}
\end{theorem}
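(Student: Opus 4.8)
The plan is to start from the analytic definition $E_{F(\Omega)}(z,w)=\exp C_{F(\Omega)}(z,w)$, with $C_{F(\Omega)}$ the double Cauchy transform (\ref{doubleCauchy}), and to reduce the right member to a combination of double Cauchy transforms of $\Omega$ itself via the substitution $\eta=F(\zeta)$. Since $F$ is holomorphic and one-to-one on $\Omega$, it is a biholomorphism $\Omega\to F(\Omega)$, so the area integral pulls back as
$$
C_{F(\Omega)}(z,w)=\frac{1}{2\pi\I}\int_{F(\Omega)}\frac{d\eta}{\eta-z}\wedge\frac{d\bar\eta}{\bar\eta-\bar w}=\frac{1}{2\pi\I}\int_\Omega\frac{dF}{F-z}\wedge\frac{d\bar F}{\bar F-\bar w},
$$
the integrand being the pullback of the defining $2$-form. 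The hypotheses $z,w\notin\overline{F(\Omega)}$ ensure $F-z$ and $F-w$ have no zeros on $\overline\Omega$, and boundedness of $F$ on $\Omega$ ensures its poles lie off $\overline\Omega$; hence every base point produced below lies in $\C\setminus\overline\Omega$ (or at $\infty$), where the exterior transform $E_\Omega$ is well defined.

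Next I would exploit that $F$ is rational. For fixed $z$ the logarithmic derivative of $F-z$ has the global partial-fraction expansion
$$
\frac{dF}{F-z}=\sum_k\frac{n_k}{\zeta-p_k}\,d\zeta,
$$
where $(F-z)=\sum_k n_k(p_k)$ is the divisor of $F-z$ on $\P$ and the sum runs over its finite points; conjugating gives the analogous expansion of $\frac{d\bar F}{\bar F-\bar w}$ in terms of $(F-w)=\sum_l m_l(q_l)$. Substituting both and using bilinearity of the wedge, the integral splits as
$$
C_{F(\Omega)}(z,w)=\sum_{k,l}n_k m_l\cdot\frac{1}{2\pi\I}\int_\Omega\frac{d\zeta}{\zeta-p_k}\wedge\frac{d\bar\zeta}{\bar\zeta-\bar q_l}=\sum_{k,l}n_k m_l\,C_\Omega(p_k,q_l),
$$
each inner integral being exactly the double Cauchy transform $C_\Omega(p_k,q_l)$ of (\ref{doubleCauchy}).

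Finally, exponentiating and using $E_\Omega=\exp C_\Omega$ gives
$$
E_{F(\Omega)}(z,w)=\prod_{k,l}E_\Omega(p_k,q_l)^{n_k m_l}=\exp\scal{(F-z)\otimes(F-w)}{\log E_\Omega},
$$
which is precisely the multiplicative action $E_\Omega((F-z),(F-w))$ in the sense of (\ref{double}), establishing (\ref{www}). I expect the one delicate point to be the bookkeeping at infinity: when $\infty$ lies in the support of $(F-z)$ or $(F-w)$ it is dropped from the partial-fraction sum above, and it must correspondingly contribute only the trivial factor $E_\Omega(\infty,\cdot)=1$ to the product, which holds because $C_\Omega(z,w)\to 0$ as either variable tends to $\infty$ by the series expansion of the double Cauchy transform. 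The mutual consistency of these two omissions rests on the divisor of a rational function on $\P$ having degree zero, which is exactly the fact underlying the partial-fraction identity; everything else is the change of variables and bilinearity.
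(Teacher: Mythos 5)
Your proposal is correct and follows essentially the same route as the paper: pull the double Cauchy integral back to $\Omega$ by the change of variables $\eta=F(\zeta)$, expand the logarithmic derivative $d\log(F-z)$ over the divisor $(F-z)$ (the paper writes this as $\sum_{\alpha\in\P}D_z(\alpha)\,d\log(\zeta-\alpha)$, which is your partial-fraction identity), and exponentiate the resulting bilinear sum of double Cauchy transforms to obtain the multiplicative action (\ref{double}). Your remark on the bookkeeping at $\infty$ via $E_\Omega(\infty,\cdot)=1$ and the degree-zero property of the divisor is a point the paper leaves implicit.
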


\begin{proof}
We have
$$
E_{F(\Omega)}(z,w)=\exp \big[\frac{1}{2\pi \I}\int\limits_{F(\Omega)}
\frac{d\zeta\wedge d\overline{\zeta} }{(\zeta-z)(\overline{\zeta}- \bar {w})}\big]
$$
$$
=\exp \big[\frac{1}{2\pi \I}\int\limits_{F(\Omega)}
d\log (\zeta-z)\wedge d\overline{\log (\zeta-w)}\big]
$$
$$
=\exp \big[\frac{1}{2\pi \I}\int\limits_{\Omega}
d\log (F(\zeta)-z)\wedge d\overline{\log (F(\zeta)-w)}\big].
$$

Let $D_z=(F-z)$ denote the divisor of $F(\zeta)-z$, and consider it as a function $D_z:\P\to \Z$, namely that function
which at each point evaluates the order of the divisor (hence is zero at all but finitely
many points). Then
$$
d\log (F(\zeta)-z)=\sum_{\alpha\in \mathbb{P}}D_z(\alpha){d\log}({\zeta-\alpha}),
$$
$$
d\,\overline{\log (F(\zeta)-w)}=\sum_{\alpha\in \mathbb{P}}D_w(\beta){d\log}({\bar\zeta-\bar\beta}).
$$

With these observations we can continue the above series of equalities:
$$
E_{F(\Omega)}(z,w)=\exp \big[\frac{1}{2\pi \I}\int\limits_{\Omega}
d\log (F(\zeta)-z)\wedge d\overline{\log (F(\zeta)-w)}\big]
$$
$$
=\exp \big[\frac{1}{2\pi \I}\sum_{\alpha\in \mathbb{P}}\sum_{\beta\in \mathbb{P}}D_z(\alpha)D_w(\beta)\int\limits_{\Omega}
\frac{d\zeta}{\zeta-\alpha}\wedge\frac{d\bar\zeta}{\bar{\zeta}-\bar{\beta}}\big]
$$
$$
=\prod_{\alpha,\beta \in \mathbb{P}}E_\Omega(\alpha,\beta)^{D_z(\alpha) D_w(\beta)}=E_{\Omega}(D_z,D_w),
$$
which is (\ref{www}).

\end{proof}


\section{Potential theoretic remarks}\label{sec:potential}

Recall the definition (\ref{potential}) of logarithmic potential of a signed measure $\mu$ with compact support
in $\C$. If $\mu$ has not zero net mass, then the behavior of the potential at infinity is such that
it automatically puts  an extra point mass at infinity, to the effect that the potential globally (on $\P$) becomes the
potential of a measure (again denoted $\mu$) having vanishing net mass.
If $\mu$ is given by the divisor of a meromorphic (rational) function $f$, {\it i.e.}, if
$$
d\mu=\delta_{(f)}dx\wedge dy
$$
in the notation (\ref{muD}), then
$$
U_\mu =-\log|f|.
$$

The point we wish to make here is that the resultant $\Res(f,g)$ between two rational functions, $f$ and $g$,
has a corresponding interpretation, namely in terms of the mutual energy.
In general, the mutual energy between two signed measure $\mu$ and $\nu$ of zero net mass (on $\P$) is given by
$$
I(\mu,\nu)=\iint\log \frac{1}{|z-\zeta|}\;d\mu(z)d\nu(\zeta)=\int U_\mu\,d\nu=\int U_\nu\,d\mu
$$
$$
=\frac{1}{2\pi}\int d U_\mu \wedge * d U_\nu.
$$
Some care is needed here since the above is not always well-defined (for example if $\mu$ and $\nu$ have
common point masses). But this is just the same situation as  for the resultant.
Now, with $f$ related as above to $\mu$, and similarly $g$ related to $\nu$ by $d\nu=\delta_{(g)}dx\wedge dy$,
then we have the relationship
$$
I(\mu,\nu)=-\log |\Res(f,g)|.
$$

The proof is just a straight-forward computation:
$$
|\Res(f,g)|^2=\exp \big[\scal{(f)}{\log g}+\scal{(f)}{\overline{\log g}}\big]
$$
$$
=\exp\big[{2\scal{(f)}{\log |g|}}\big]=\exp\big[2\int_{(f)}\log |g|\big]
$$
$$
=\exp\big[{-2\int U^\nu \,d\mu}\big]=\exp\big[{-2I(\mu,\nu)}\big].
$$

Similarly, one can relate the discriminant of $f$ with a renormalized self-energy of $\mu$. We refer to \cite{Gustafsson-Tkachev-2009}
for some details concerning this.

All of the above generalizes well from $\P$ to an arbitrary compact Riemann surface $M$. The potential will then
be less explicit, but it follows from classical theory (for example Hodge theory) that given any
signed measure $\mu$ on $M$ with $\int_M d\mu=0$ there is potential $U_\mu$, uniquely defined up to an additive constant, such that,
considering $\mu$ as a $2$-form current,
\begin{equation}\label{ddU}
-d * d U_\mu=2\pi \mu.
\end{equation}

Choosing in particular $d\mu=\delta_a dx\wedge dy-\delta_a dx\wedge dy$ for two points $a,b\in M$ gives a fundamental potential
that we shall denote $V(z)=V(z,w;a,b)$. Besides depending on $a$ and $b$ it depends on a parameter $w$, for normalization.
The potential is characterized by the singularity structure
\begin{equation}\label{eq:V}
V(z)= V(z,w;a,b)= -\log|z-a|+ \log|z-b|+ {\rm harmonic},
\end{equation}
together with the normalization  $V(w,w;a,b)= 0$ of the additive level. It has a few symmetries,
\begin{equation}\label{eq:symm}
V(z,w;a,b)=V(a,b;z,w)=-V(z,w;b,a),
\end{equation}
and the transitivity property
\begin{equation}\label{eq:trans}
V(z,w;a,b)+V(z,w;b,c)=V(z,w;a,c).
\end{equation}

If the Riemann surface happens to be symmetric (like the Schottky double), with involution $J$, then we have in addition
\begin{equation}\label{eq:VVJ}
V(z,w;a,b)=V(J(z),J(w);J(a),J(b)).
\end{equation}
As an application, the ordinary Green's function $g(z,\zeta)$ for a plane domain (or bordered Riemann surface) $\Omega$
can be obtained from the potential $V$ for $M=\widehat{\Omega}$ by
\begin{equation}\label{eq:GVJ}
g(z,\zeta)=\frac{1}{2}V(z,J(z); \zeta, J({\zeta}))= V(z,w; \zeta,
J({\zeta})),
\end{equation}
where $w$ is an arbitrary point on $\partial\Omega$ and where the
second equality follows from (\ref{eq:symm}), (\ref{eq:trans}),
(\ref{eq:VVJ}). Cf. \cite{Fay-1973}, p.~125f.
See also \cite{Gustafsson-Sebbar-2012}
 for some more details.

The potential $V$ appears implicitly or explicitly in classical texts, e.g.,
\cite{Weyl-1964}, \cite{Schiffer-Spencer-1954}.
Here we discuss it
briefly because it connects to the exponential transform. To start, it has a certain
self-reproducing property, expressed in the identity
\begin{equation}\label{eq:VVV}
V(z,w;a,b)=\frac{1}{2\pi}\int_M dV(\cdot, c;z,w)\wedge
^*dV(\cdot,c;a,b),
\end{equation}
valid for arbitrary non-coinciding points $z,w,a,b,c\in M$. Alternatively,
\begin{equation}\label{eq:VVVpartial}
V(z,w;a,b)=\frac{\I}{\pi}\int_M \partial V(\cdot, c;z,w)\wedge\bar\partial V(\cdot,c;a,b),
\end{equation}
where $\partial V=\frac{\partial V}{\partial z}dz$, $\overline{\partial} V=\frac{\partial V}{\partial \bar z}d\bar z$.
The proof of (\ref{eq:VVV}) is straight-forward:
the right member becomes, after partial integration, an application of (\ref{ddU})
and by using the symmetries (\ref{eq:symm}) and (\ref{eq:trans}),
$$
-\frac{1}{2\pi}\int_M V(\cdot, c;z,w)\wedge
d*dV(\cdot,c;a,b)
$$
$$
=V(a,c;z,w)-V(b,c;z,w)=V(z,w;a,b),
$$
which is the left member.

In the case of the Riemann sphere, $V$ relates to the cross-ratio (\ref{cross} by
\begin{equation}\label{eq:Vcross}
V(z,w;a,b)=-\log|(z:w:a:b)| =-\log
\big|\frac{(z-a)(w-b)}{(z-b)(w-a)}\big|.
\end{equation}
Therefore the extended exponential transform for any domain $\Omega\subset\P$ can be conveniently
expressed in terms of $V$ for $\P$, for example
$$
E_\Omega(z,w;a,b)=\exp\big[\frac{2}{\I\pi}\int_\Omega {\partial}{V(\cdot,c;z,a)}\wedge\overline{\partial} V(\cdot,c;w,b)\big],
$$
and for the modulus,
$$
|E_\Omega(z,w;a,b)|=\exp\big[-\frac{1}{\pi}\int_\Omega {d}{V(\cdot,c;z,a)}\wedge *d V(\cdot,c;w,b) \big].
$$

In the case of a compact Riemann surface, $\int_M \partial V(\cdot, c;z,a)\wedge\bar\partial V(\cdot,c;w,b)$
is purely imaginary, by (\ref{eq:VVVpartial}). In particular we get, for $\Omega=M=\P$,
and by using (\ref{eq:VVVpartial})  and  (\ref{eq:Vcross}), that
$$
E_\P(z,w;a,b)=\exp\big[\frac{2}{\I\pi}\int_\P {\partial}{V(\cdot,c;z,a)}\wedge\overline{\partial} V(\cdot,c;w,b)\big]
$$
$$
=\exp\big[-2V(z,a;w,b)\big]=|(z:a:w:b)|^2.
$$
Thus formula (\ref{crossratio}),  which was asserted and proved in another way in Section~\ref{sec:Cauchy},
has been proved again.


\section{Moment coordinates}\label{sec:moment coordinates}

In this section we restrict again to the case that $\Omega\subset\C$ is a simply connected domain with analytic boundary, and with $0\in\Omega$.
In order to keep the presentation reasonably rigorous we shall, much of the time, actually restrict even more, namely by considering
only those domains which are conformal images of the unit disk under  polynomial conformal maps,
\begin{equation}\label{f}
f(\zeta)=\sum_{k=0}^{N} a_k\zeta^{k+1},
\end{equation}
with  $a_0>0$ (normalization) and $N$ fixed. This will be a manifold, call it $\mathcal{M}_{2N+1}$,
of real dimension $2N+1$, in fact it is an open subset of $\R\times \C^N\cong \R^{2N+1}$ with
$a_0\in\R$ and $a_j\in\C$ ($1\leq j\leq N$) as coordinates.

Conformal images of the unit disk under polynomial conformal maps are exactly
the simply connected quadrature domains with the origin as the sole quadrature node, {\it i.e.}, with $m=1$, $a_1=0$ in the notation
of (\ref{QD}), and the order of the quadrature domain then equals the degree of the polynomial ($N+1$ in the above notation).
The class of domains we are considering here is slightly smaller than this class of quadrature domains
because the  assumption of analytic boundary require our polynomials $f$ to be
univalent in a full neighborhood of the closed unit disk. But it has the advantage that $\mathcal{M}_{2N+1}$ will be an ordinary
open manifold (without boundary). For algebraic purposes one could equally well work with the just locally univalent polynomials,
{\it i.e.}, those polynomials $f$ in (\ref{f}) for which $f'\ne 0$ on $\overline{\D}$.

\begin{remark}
The restriction to polynomial $f$ does not change anything in principle, all formulas will look basically
the same as in the general case. In the other extreme, without changing much in practice one may pass
from the infinite dimensional case to the formal level (with no bothering about convergence)
of transfinite functions, see \cite{Gustafsson-Tkachev-2009b}.
\end{remark}

As real coordinates on $\mathcal{M}_{2N+1}$ we may of course use the real and imaginary parts of the coefficients
appearing in (\ref{f}), namely $a_0$, $\re a_j$, $\im a_j$ ($1\leq j\leq N$), but
much of what we are going to discuss concern what happens when we switch to
$M_0$, $\re M_j$, $\im M_j$ ($1\leq j\leq N$) as local coordinates.

The map (or change of coordinates) $(a_0, a_1, a_2,\dots)\mapsto (M_0, M_1, M_2,\dots)$ is explicitly
given by Richardson's formula \cite{Richardson-1972},
\begin{equation}\label{rich}
M_k=\sum (j_0+1) a_{j_0}\cdots a_{j_{k}}\bar{ a}_{j_0+\ldots +j_{k}+k}.
\end{equation}
The summation here goes  over all multi-indices $(j_0,\ldots,j_k)\geq (0,\ldots,0)$ for which
${j_0+\ldots +j_{k}+k}\leq N$. It is convenient to set $a_j=0$ for $j>N$,
and then the upper bound is not needed. (And with $a_j=0$ for $j<0$ the lower bound is
not needed either.) The formula (\ref{rich}) is proved by pulling (\ref{Mkboundary}) back to the unit circle
by the map (\ref{f}) and then using the residue theorem (after having replaced $\bar f$ by $f^*$).

It is clear from (\ref{rich}) that $M_{N+1}=M_{N+2}=\dots =0$ when $a_{N+1}=a_{N+2}=\dots =0$,
and it follows from basic facts about quadrature domains that the converse statement holds as
well (when $f$ is univalent in a full neighborhood of the closed unit disk).
It is virtually impossible to invert (\ref{rich}) in any explicit way. Even for $N=1$ one would need to solve
a third degree algebraic equation in order to express $a_0$, $a_1$ in terms of $M_0$, $M_1$. And the map
$(a_0, a_1, a_2,\dots)\mapsto (M_0, M_1, M_2,\dots)$ is (generally speaking) not one-to-one, as the
example \cite{Sakai-1978} (and several similar examples, cf. \cite{Zalcman-1987}) shows. The author does not know,
however, of any such example in the present setting of polynomial conformal maps. C.~Ullemar \cite{Ullemar-1980}
gives an example of two polynomials of degree three, one univalent and one only locally univalent, which give rise to the
same moments.

\begin{remark}
Even when $M_{N+1}=M_{N+2}=\dots =0$, the negative moments $M_{-1}$, $M_{-2}$, \dots generally make up a full infinite
sequence (of nonzero numbers). This is related to the fact that when $\Omega$ is a quadrature domain, then its complement
is almost never a quadrature domain (one can easily make sense to the notion of an unbounded quadrature domain, or a quadrature
domain in $\P$).
\end{remark}

\bigskip

A deformation of a domain $\Omega$ as above corresponds to a smooth curve $t\mapsto \Omega(t)$ with $\Omega(0)=\Omega$
(equivalently, $t\mapsto f(\zeta,t)$)  in $\mathcal{M}_{2N+1}$,
and its velocity at $t=0$ is a vector in the real tangent space of $\mathcal{M}_{2N+1}$ at the point in question.
This tangent space has, in terms of the coordinates introduced above, two natural bases, namely
$$
\frac{\partial}{\partial a_0}, \quad \frac{\partial}{\partial \re a_j},\quad \frac{\partial}{\partial \im a_j}
$$
$$
\frac{\partial}{\partial M_0}, \quad \frac{\partial}{\partial \re M_j},\quad \frac{\partial}{\partial \im M_j}
$$
($1\leq j\leq N$), respectively.

It is natural to consider also the corresponding complexified tangent space, obtained by allowing complex coefficients
in front of the above basis vectors. For this tangent space we also have the bases
$$
\frac{\partial}{\partial a_0}, \quad \frac{\partial}{\partial  a_j},\quad \frac{\partial}{\partial \bar a_j}
$$
$$
\frac{\partial}{\partial M_0}, \quad \frac{\partial}{\partial  M_j},\quad \frac{\partial}{\partial \overline{M}_j}.
$$
Similar considerations apply for the cotangent spaces, {\it i.e.}, we have the two natural bases, ${d a_0}$, ${d  a_j}$, ${ d\bar a_j}$ and
${d M_0}$, ${d  M_j}$, ${ d\bar M_j}$, etc.
It should be noted, however, that complex tangent vectors such as $\frac{\partial}{\partial  M_j}$ ($j\geq 1$) do not
really correspond to velocity vectors for deformations of domains, only vectors in the real tangent space do.

The dependence of $f$ on the coefficients $a_0,\dots, a_N$ is certainly analytic by (\ref{f}), but as can be understood from
the appearance of conjugations in (\ref{rich}), the dependence on $M_0,\dots, M_N$ is no longer analytic.
Therefore we prefer to write this dependence as
\begin{equation}\label{fM}
f(\zeta)=f(\zeta; \bar{M}_N,\dots,\bar{M}_1, M_0, M_1,\dots, M_N),
\end{equation}
or just $f(\zeta;M)$, with $M$ shorthand for the list of moments:
\begin{equation}\label{defM}
M=(\bar{M}_N,\dots,\bar{M}_1, M_0, M_1,\dots, M_N).
\end{equation}

Despite the implicit nature of the dependence of $f$ on the moments, several interesting general
statements can be made, and we want to highlight a couple of them.
The first is due to O.~Kuznetsova and V.~Tkachev
\cite{Kuznetsova-Tkachev-2004}, \cite{Tkachev-2005}, who were
able to compute the Jacobian determinant for the transition between the above sets of coordinates, thereby confirming a
conjecture of C.~Ullemar \cite{Ullemar-1980}. The beautiful result can be expressed in terms of the ratio
between the volume forms as follows:
\begin{align}\label{jacobian}
&d\bar M_N\wedge\dots d\bar M_1 \wedge dM_0 \wedge d M_1\wedge\dots \wedge d M_N=\\
=&2a_0^{N^2+3N+1}\Res (f', f'^*)\cdot
d\bar a_N\wedge\dots d\bar a_1 \wedge da_0 \wedge d a_1\wedge\dots \wedge d a_N.
\end{align}
Here $\Res (f', f'^*)$ is the meromorphic resultant between $f'$ and $f'^*$,
as discussed in Section~\ref{sec:resultant}. When $f$ is univalent (or locally univalent), then
$\Res (f', f'^*)\ne 0$, hence the coordinate transition is locally one-to-one.

The second general statement is actually a series of results obtained by M.~Mineev-Weinstein, P.~Wiegmann, A.~Zabrodin,
with later contributations also from other authors,
\cite{Wiegmann-Zabrodin-2000}, \cite{Mineev-Zabrodin-2001}, \cite{Marshakov-Wiegmann-Zabrodin-2002},
\cite{Krichever-Marshakov-Zabrodin-2005}.
We shall below, and in the next two sections, discuss some of these results, namely the string equation (\ref{string}),
the prepotential (\ref{prepotential})
and integrability of moments (\ref{M_k}) (we will provide a proof of (\ref{dprepotential})) and, finally,
Hamiltonian formulations of the evolution equations for the mapping functions (\ref{fH}).

The results are partly formulated in terms of a {\it Poisson bracket}, which for any two functions
$f=f(\zeta;M)$, $g=g(\zeta;M)$, analytic for $\zeta$ in a neighborhood of the unit circle and also
depending on the moments $M$, is defined by
\begin{equation}\label{poisson0}
\{f,g\}=\zeta \frac{\partial f}{\partial \zeta}\frac{\partial g}{\partial M_0}
-\zeta \frac{\partial g}{\partial \zeta}\frac{\partial f}{\partial M_0}.
\end{equation}

As a first choice here we take $f$ to be the conformal map (\ref{f}) and $g=f^*$, the reflection of $f$ in the unit circle:
$f^*(\zeta; M)=\overline{f(1/\bar \zeta; M)}$.
Since we are assuming that $\Omega$ has analytic boundary, $f$ and $f^*$ are both analytic in some neighborhood
of $\partial\D$, hence so is $\{f,f^*\}$. Now it turns out that it is much better than so, in fact
$\{f,f^*\}$ is analytic in the whole Riemann sphere, hence is constant. This remarkable fact is sometimes referred
to as the {\it string equation}, which more precisely reads
\begin{equation}\label{string}
\{f,f^*\}=1.
\end{equation}

As for the proof (which follows \cite{Wiegmann-Zabrodin-2000} etc.) one first notices that $f^*=S\circ f$ ($S$
is the Schwarz function), since this holds on $\partial\D$. Since also $S=S(z;M)$ depends on the moments this spells out to
$$
f^*(\zeta; M)=S(f(\zeta; M); M).
$$
Using hence the chain rule when computing $\frac{\partial f^*}{\partial M_0}$ one arrives at
$$
\{f,f^*\}=\zeta \frac{\partial f}{\partial \zeta}\cdot ( \frac{\partial S}{\partial M_0}\circ f).
$$
Now we notice from (\ref{schwarz}) that
$$
\frac{\partial S}{\partial M_0}(z;M)=\frac{1}{ z} + \text{positive powers of }z,
$$
since the coefficients $M_1, M_2, \dots$ of the remaining negative powers of $z$ in (\ref{schwarz}) are independent of $M_0$.
From this one sees that $\{f,f^*\}$ is holomorphic in $\D$. Similarly, it is holomorphic in
$\P\setminus\overline{\D}$, hence it must in fact be constant, and one easily checks that this
constant is one, proving (\ref{string}).

The string equation (\ref{string}) may appear to be quite remarkable identity for conformal maps, but actually it can be identified with an equation which was discovered much earlier, namely the {\it Polubarinova-Galin equation} \cite{Galin-1945}, \cite{Polubarinova-Kochina-1945}, \cite{Vinogradov-Kufarev-1948}, \cite{Gustafsson-Vasiliev-2006}, \cite{Vasiliev-2009} for the evolution of a Hele-Shaw blob with a source at the origin (Laplacian growth), this taken in combination with the Richardson moment conservation law \cite{Richardson-1972} for such a flow. These considerations will be elaborated in the next section.


\section{Domain deformations driven by harmonic gradients}\label{sec:deformation}

Continuing on the theme of the previous section, we now discuss general domain variations from the point of view of Laplacian growth.
We first consider an, in principle arbitrary, deformation $t\mapsto\Omega(t)$ of a simply connected domain $\Omega=\Omega(0)$.
Let $V_n$ denote the speed of $\Gamma(t)=\partial\Omega(t)$ measured in the outward normal direction.
We need to assume that all data are real analytic. This can for example be expressed by saying that the Schwarz function
$S(z,t)$ of $\Gamma(t)$ is real analytic in the parameter $t$ (besides being analytic in $z$, in a neighborhood of $\Gamma(t)$).
Then, by the Cauchy-Kovalevskaya theorem, there exists, for each $t$, a harmonic function  $g(\cdot,t)$ defined in some
neighborhood of $\Gamma(t)$ and satisfying
\begin{align}\label{Vn}
\begin{cases}
g(\cdot,t)&=0 \quad\quad \text{on}\,\, \Gamma(t),\\
-\frac{\partial g(\cdot,t)}{\partial n}&=V_n \quad \text{on}\,\, \Gamma(t).
\end{cases}
\end{align}
This function $g$ can actually also be defined directly. In fact, it is easy to see that (\ref{Vn})
can be expressed as
$$
\frac{\partial S(z,t)}{\partial t}=-4\frac{\partial g(z,t)}{\partial z}
$$
holding on $\Gamma(t)$, and hence identically (both members being analytic in $z$).
To be precise, the latter equation actually contains (\ref{Vn}) only with the first condition weakened to
$g(\cdot,t)$ being constant on $\Gamma(t)$.
From the above one gets the formula
$$
g(z,t)= -\frac{1}{2}\re \int_{z_0(t)}^z \frac{\partial S}{\partial t}(\zeta,t)d\zeta,
$$
where $z_0(t)$ is an arbitrary point on $\Gamma(t)$. Certainly, there will be exist neighborhood of $\Gamma(0)$
in which all the functions $g(\cdot,t)$, for $|t|$ sufficiently small, are defined.

Near $\Gamma(0)$ we have the usual cartesian coordinates $(x,y)$ ($z=x+\I y$), but from $g$ we also get another pair, $(t,\theta)$, namely
$$
\begin{cases}
t=t(z)\quad \text{determined by\,\,} z\in\Gamma_{t(z)},\\
\theta=-*g(z,t(z)).
\end{cases}
$$
Here $*g$ is the conjugate harmonic function of $g$. We shall also write $d\theta =-*dg$, so this star is actually the Hodge star.
Thus $t=t(z)$ is the time when $\Gamma(t)$ arrives at the point $z$, and $\theta$ is a coordinate along each $\Gamma(t)$.
In order for $t(z)$ to be well-defined, at least locally, we need to assume that $V_n\ne 0$.

Fix a value of $t$ and introduce auxiliary coordinates $(n,s)$ in a neighborhood of $\Gamma(t)$ such that,
on $\Gamma(t)$, $\frac{\partial}{\partial s}$ is the tangent vector and $\frac{\partial}{\partial n}$ is the
outward normal vector (thus $s$ is arc length  along  $\Gamma(t)$). Then we have (on $\Gamma(t)$)
$$
*dg=-\frac{\partial g}{\partial s}dn +\frac{\partial g}{\partial n}ds=\frac{\partial g}{\partial n}ds,
$$
and so
$$
d\theta =-*dg(z,t) = V_n ds.
$$
On the other hand (still on $\Gamma(t)$),
$$
\frac{\partial t}{\partial s}=0,\quad \frac{\partial t}{\partial n}=\frac{1}{V_n},
$$
so that
$$
dt=\frac{1}{V_n}dn.
$$
It follows that $dt\wedge d\theta= dn\wedge ds$ on $\Gamma(t)$. But
since $(\frac{\partial}{\partial n},\frac{\partial}{\partial s})$ is just a rotation of
$(\frac{\partial}{\partial x},\frac{\partial}{\partial y})$, we also have $dn\wedge ds= dx\wedge dy$.
This gives $dt\wedge d\theta= dx\wedge dy$ on $\Gamma(t)$, which does not involve the
auxiliary coordinates $(n,s)$. Hence the latter equation actually holds in a full neighborhood of $\Gamma(t)$.
Equivalently,
\begin{equation}\label{dxdydthetadt}
\frac{\partial(x,y)}{\partial(t,\theta)}=1.
\end{equation}

We summarize.

\begin{proposition}\label{prop:string}
Any real analytic deformation $t\mapsto\Gamma(t)$ can be described, locally in regions where $V_n\ne 0$ and
in terms of local coordinates $(t,\theta)$ introduced as above, by the  ``string equation''
\begin{equation}\label{dthetadt}
dt\wedge d\theta= dx\wedge dy.
\end{equation}
\end{proposition}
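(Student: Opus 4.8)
The plan is to establish the identity $dt\wedge d\theta = dx\wedge dy$ directly, following essentially the chain of equalities that the surrounding text already lays out, but organized so that the logic is transparent. The strategy is to fix a value of $t$, work in the auxiliary coordinates $(n,s)$ adapted to $\Gamma(t)$, compute both $d\theta$ and $dt$ explicitly in these coordinates, and then observe that the resulting identity between two-forms is coordinate-free, so it propagates off $\Gamma(t)$ into a full neighborhood.

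First I would recall the setup: $g(\cdot,t)$ solves the Dirichlet/Neumann data $(\ref{Vn})$, and $\theta = -{*}g$ with $d\theta = -{*}dg$, where ${*}$ is the Hodge star. Choosing $(n,s)$ so that $\partial/\partial s$ is the unit tangent and $\partial/\partial n$ the outward unit normal along $\Gamma(t)$ (so $s$ is arc length), I would compute ${*}dg$ in these coordinates. Since ${*}$ sends $dn\mapsto ds$ and $ds\mapsto -dn$, one gets ${*}dg = -\frac{\partial g}{\partial s}\,dn + \frac{\partial g}{\partial n}\,ds$. On $\Gamma(t)$ the boundary condition $g=0$ forces the tangential derivative $\partial g/\partial s$ to vanish, leaving ${*}dg = \frac{\partial g}{\partial n}\,ds$, and the Neumann condition $-\partial g/\partial n = V_n$ then gives $d\theta = V_n\,ds$. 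In parallel, since $\Gamma(t)$ moves with normal speed $V_n$, the time coordinate $t(z)$ satisfies $\partial t/\partial s = 0$ and $\partial t/\partial n = 1/V_n$ along $\Gamma(t)$, so $dt = \frac{1}{V_n}\,dn$.

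Multiplying these two one-forms, the factors of $V_n$ cancel and I obtain $dt\wedge d\theta = dn\wedge ds$ on $\Gamma(t)$. The final ingredient is that $(n,s)$ is an orthonormal frame obtained from the Cartesian frame $(x,y)$ by a rotation (the normal and tangent directions are a rotated copy of the coordinate axes), so the two frames induce the same oriented area element, whence $dn\wedge ds = dx\wedge dy$. Combining, $dt\wedge d\theta = dx\wedge dy$ holds pointwise on $\Gamma(t)$. The key closing observation is that the right-hand and left-hand sides are both intrinsic two-forms not referencing $(n,s)$ at all; since this holds on $\Gamma(t)$ for every $t$ in the relevant range and the family $\{\Gamma(t)\}$ sweeps out a full neighborhood of $\Gamma(0)$, the identity holds throughout that neighborhood. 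The equivalent Jacobian form $\frac{\partial(x,y)}{\partial(t,\theta)}=1$ is just a restatement.

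The main obstacle is not any single computation but keeping the two coordinate systems cleanly separated: $(t,\theta)$ is a global (in the flow region) coordinate pair built from $g$, whereas $(n,s)$ is only an auxiliary frame attached to a single fixed curve $\Gamma(t)$ and is used purely to evaluate the forms. The subtle point to argue carefully is the promotion from ``on $\Gamma(t)$'' to ``in a neighborhood'': one must note that the identity $dt\wedge d\theta=dx\wedge dy$, once established on each leaf $\Gamma(t)$ and being expressed in coordinate-invariant quantities, automatically holds on the union of the leaves, which is open. This requires the standing assumption $V_n\neq 0$ so that $t(z)$ is well defined and the leaves foliate the neighborhood without degeneracy. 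I would also flag that real analyticity of the deformation (equivalently of the Schwarz function in $t$) is what guarantees the conjugate harmonic function $\theta=-{*}g$ exists and is smooth enough for these manipulations.
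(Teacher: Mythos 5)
Your proposal is correct and follows essentially the same route as the paper: the same auxiliary orthonormal frame $(n,s)$ along $\Gamma(t)$, the same computations $d\theta=-{*}dg=V_n\,ds$ and $dt=\frac{1}{V_n}\,dn$, and the same observation that $dt\wedge d\theta=dn\wedge ds=dx\wedge dy$ is coordinate-free and therefore extends from each leaf to a full neighborhood. Your explicit emphasis on why the identity propagates off the leaves (the leaves foliate an open set where $V_n\neq 0$) is a welcome clarification of a point the paper passes over quickly, but it is not a different argument.
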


Compare discussions in, for example, \cite{Khavinson-Mineev-Putinar-2009}.
The reason for calling (\ref{dthetadt}) a string equation will become clear below.

Now we change the point of view.
In the above considerations, the normal velocity  $V_n$ of the boundary was the given data
(essentially arbitrary, except for regularity requirements)
and the harmonic function $g$ was introduced as a secondary
object. {\it Laplacian growth}, on the other hand, starts with a rule for prescribing a harmonic function
$g=g_\Omega$, associated to any given domain $\Omega$, defined in the domain (or at least near the boundary of it)
and vanishing on the boundary. Then one asks the boundary to move with the normal velocity
$$
V_n=-\frac{\partial g}{\partial n}.
$$
Thus $g$ now works as the generating function which forces the motion of $\partial\Omega$, and setting again $\theta =-*g$,
(\ref{dthetadt}) remains valid in regions where $V_n\ne  0$. Typically, $g$ is specified by suitable source terms in $\Omega$
or by boundary conditions on some fixed boundaries.

The classical case is that $0\in\Omega$ and that $g=g_\Omega$ is the Green's function of
$\Omega$ with pole at $z=0$:
\begin{align}\label{green}
\begin{cases}
g(z,t)=-\log|z|+\text{harmonic} \quad &(z\in\Omega),\\
g(z,t)=0\quad &(z\in\partial\Omega).
\end{cases}
\end{align}
When pulled back to the unit disk by $f:\D\to\Omega$ (normalized conformal map) one then gets
$g(f(\zeta),t)=-\log|\zeta|$, and hence
$\theta= -*g(f(\zeta),t)= \arg \zeta$. Thus $\theta$ will simply be the angular variable on the unit
circle. In addition, one sees that $d\theta$ can be interpreted as harmonic measure (on $\partial\D$, and on $\partial\Omega$).
This classical version of Laplacian growth is connected to the harmonic moments in that it is characterized by the moments
$M_k$, $k\geq 1$, being constants of motions (`integrals'), while $M_0$ evolves linearly in time.
Precisely, with $g$ defined by (\ref{green}) we will have (as functions of $t$)
\begin{equation}\label{conserv}
\begin{cases}
M_0=2t +\text{constant},\\
M_k=\text{constant} \quad (k\geq 1).
\end{cases}
\end{equation}
These conservation laws were discovered by S.~Richardson \cite{Richardson-1972}, and they are easy to verify
directly. In fact, one shows that
$$
\frac{d}{dt}\int_{\Omega(t)}h dm =2\pi h(0)
$$
for any function $h$ which is harmonic in a region containing the closure of $\Omega(t)$, and then (\ref{conserv})
is obtained by choosing $h(z)=z^k$ ($k\geq 0$).

Now we can compare (\ref{string}) with (\ref{dxdydthetadt}) or (\ref{dthetadt}):
since $\{f,f^*\}$ is an analytic function, the restriction of (\ref{string}) to $\partial\D$ contains
all information. And on $\partial\D$ we have, on setting $\zeta=e^{\I\theta}$ and writing $f(e^{\I\theta},t)=x+\I y$,
$$
\{f,f^*\}=2\im \big[\frac{\partial f}{\partial \theta}\frac{\partial \bar f}{\partial M_0}\big]=2\frac{\partial(x,y)}{\partial(M_0,\theta)}.
$$
Since $2\frac{\partial }{\partial M_0}=\frac{\partial }{\partial t}$ by (\ref{conserv}) it follows that
(\ref{string}) is the same as (\ref{dxdydthetadt}), or (\ref{dthetadt}).
This was the reason for calling (\ref{dthetadt}) a string equation.

\begin{remark}
A common variant of the above is that $\Omega$ is a domain in the Riemann sphere, $\infty\in\Omega$,
and that $g$ is the Green's function with pole at infinity.
\end{remark}

\begin{remark}
Most of the above considerations also apply in
higher dimensions, the main difference being that, in $n$ real dimensions, $d\theta =-*dg$ will be an $(n-1)$-form
and that (\ref{dthetadt})  will take the form $dt\wedge d\theta= dx_1\wedge\dots\wedge dx_n$ (the volume form).
Another generalization, elliptic growth, is discussed in \cite{Khavinson-Mineev-Putinar-2009}.
\end{remark}

\medskip

Now we shall connect Laplacian growth evolutions to the derivatives
$\frac{\partial }{\partial M_j}$, $\frac{\partial }{\partial \bar M_j}$.
These evolutions will not be monotonic, so we must allow now $V_n$ to change sign, and hence to be zero at some points
of the boundaries.
Let $L$ denote a real-valued differential operator with constant coefficients acting on functions in $\C$, for example
$L=\frac{\partial^k}{\partial x^k}$. Then, given $\Omega(0)$
with analytic boundary, there exists locally an evolution $t\mapsto \Omega(t)$ such that
\begin{equation}\label{LLG}
\frac{d}{dt } \int_{\Omega(t)} h \,dm=2\pi (Lh)(0)
\end{equation}
for every function $h$ which is harmonic in a region containing the closure of $\Omega(t)$. This is a version of Laplacian
growth, in fact it is the evolution is driven by the harmonic function
$$
g_L(z)=\{L_a g(z,a)\}_{a=0},
$$
where $g(z,a)=-\log|z-a|+\text{harmonic}$ \quad is the ordinary Green's function with pole at $a$, and the differential operator
$L_a=L$ acts on that variable (the location of the pole). Thus, $g_L$ is harmonic in $\Omega$, has a multipole singularity
at the origin and vanishes on $\partial\Omega$.

It is actually not easy to prove that evolutions $\Omega(t)$ driven by a given harmonic gradient really exist, but it can be
done under the right assumptions (analyticity of the initial boundary is usually necessary, for example). An example of such
a proof is given in \cite{Reissig-1993}. Other approaches are presented in
\cite{Tian-1996} and \cite{Lin-2009b}.
In the algebraic framework of the previous section, {\it i.e.}, with
$\Omega\in\mathcal{M}_{2N+1}$, the full evolution will stay in $\mathcal{M}_{2N+1}$ provided
the order $k$ of the differential operator $L$ is at most $N$, and the
existence proof is much easier, see \cite{Gustafsson-1984}. In fact, the existence and uniqueness can even be read off from
(\ref{jacobian}).

For the rest of this section, and also for the next section, even if it does not change anything in practice
it is useful to think of being in the finite dimensional setting of $\mathcal{M}_{2N+1}$,
hence assuming that $k\leq N$.

To prove the assertion that $g_L$ really achieves (\ref{LLG}), recall that the evolution  $t\mapsto\Omega(t)$ is defined by the
outward normal velocity of the boundary being $V_n=-\frac{\partial g_L(z)}{\partial n}$.
Since $-\Delta g(\cdot,a)=2\pi \delta_a$ we have
$$
-\Delta g_L=-\{\Delta L_a g(\cdot,a)\}_{a=0}=-\{L_a\Delta g(\cdot,a)\}_{a=0}=2\pi L\delta_0.
$$
With $h$ harmonic this gives
$$
\frac{d}{dt } \int_{\Omega(t)} h \,dm= \int_{\partial\Omega(t)}V_n h \,ds=-\int_{\partial\Omega(t)}h\frac{\partial g_L}{\partial n}  \,ds
$$
$$
=-\int_{\Omega(t)}h\Delta{ g_L}  \,dm =2\pi Lh (0),
$$
proving (\ref{LLG}).

Useful particular choices of $L$ above are, for $k\geq 1$,
$$
L_1=\frac{\partial^k}{\partial x^k}, \quad
L_2=\frac{\partial^k}{\partial x^{k-1}\partial y}.
$$
We may allow $h$ in (\ref{LLG}) to be complex-valued, and choosing $h(z)=z^j$ and evaluating at $z=0$ one gets
\begin{equation}\label{Lz}
\{L_1 (z^k)\}_{z=0}= k!, \quad \{L_2 (z^k)\}_{z=0}=\I\cdot k!,
\end{equation}
and
$\{L_i (z^j)\}_{z=0}=0$ ($i=1,2$) in all remaining cases.
Similarly, with $h(z)=\bar z^j$ we have
\begin{equation}\label{Lzbar}
\{L_1 (\bar z^k)\}_{z=0}= k!, \quad \{L_2 (\bar z^k)\}_{z=0}=-\I\cdot k!,
\end{equation}
and
$\{L_i (\bar z^j)\}_{z=0}=0$ in the remaining cases.

In what follows, we will denote the time coordinates for the two evolutions corresponding to $L_1$ and $L_2$ by different letters,
namely $t_1$ and $t_2$, respectively.
It is a consequence of (\ref{LLG}) and (\ref{Lz}), (\ref{Lzbar}) that the evolution $t_1\mapsto\Omega(t_1)$
generated by $g_{L_1}$ has the property that
$$
\frac{d}{dt_1}M_k=\frac{d}{dt_1}\bar M_k=2k!, \quad \frac{d}{dt_1}M_j=\frac{d}{dt_1}\bar M_j=0 \quad (j\ne k).
$$
Similarly, for the evolution generated by $g_{L_2}$:
$$
\frac{d}{dt_2}M_k=-\frac{d}{dt_2}\bar M_k=\I\cdot 2k!, \quad \frac{d}{dt_2}M_j=\frac{d}{dt_2}\bar M_j=0 \quad (j\ne k),
$$
In other words, the tangent vector giving the velocity for the evolution driven by $g_{L_1}$ is
$$
\frac{d}{dt_1}=2k!(\frac{\partial}{\partial M_k}+\frac{\partial}{\partial\bar M_k})=2k!\frac{\partial}{\partial \re{M_k}},
$$
and that for $g_{L_2}$ is
$$
\frac{d}{dt_2}=2\I k!(\frac{\partial}{\partial M_k}-\frac{\partial}{\partial\bar M_k})=2k!\frac{\partial}{\partial \im{M_k}}.
$$

Now we wish to extract $\frac{\partial}{\partial M_k}$ and $\frac{\partial}{\partial \bar M_k}$ from the above
relations. We then see that these partial derivatives will correspond to linear combinations with complex (and non-real) coefficients
of the two different evolutions, or tangent vectors, hence they will not in themselves correspond to evolutions of
domains. Still they make sense, of course, as vectors in the complexified tangent space of $\mathcal{M}_{2N+1}$.
Precisely, we get
$$
\frac{\partial}{\partial M_k}
=\frac{1}{4k!}(\frac{d}{dt_1}-\I \frac{d}{dt_2}),
$$
$$
\frac{\partial}{\partial \bar M_k}
=\frac{1}{4k!}(\frac{d}{dt_1}+\I \frac{d}{dt_2}).
$$

This is what we need in order to prove (\ref{dprepotential}). Using (\ref{prepotential}) we get
$$
\frac{\partial \mathcal{F}(\Omega)}{\partial M_k}
=\frac{\partial }{\partial M_k}
\frac{1}{\pi^2}\int_{\D(0,R)\setminus\Omega}\int_{\D(0,R)\setminus\Omega} \log
\frac{1}{|z-\zeta|} dm(z) dm({\zeta})
$$
$$
=\frac{1}{4\pi^2 k!}(\frac{d}{dt_1}-\I \frac{d}{dt_2})\int_{\D(0,R)\setminus\Omega}\int_{\D(0,R)\setminus\Omega} \log
\frac{1}{|z-\zeta|} dm(z) dm({\zeta}).
$$
When computing the $\frac{d}{dt_1}$, for example, $\Omega$ is to be replaced with the evolution $\Omega(t_1)$, and
we evaluate the derivative at $t_1=0$ ($\Omega(0)=\Omega$).  There are two occurrences of $\Omega$ in the expression
for $\mathcal{F}(\Omega)$, and $\mathcal{F}(\Omega)$  symmetric
in them, so it is enough to differentiate one of the occurrences and then multiply the result by two.
Writing $2\log|z-\zeta|=\log(\zeta-z)+\log(\bar\zeta-\bar z)$, and using (\ref{LLG}), (\ref{Mkboundary}) this gives
$$
\frac{d}{dt_1}\big|_{t_1=0}\int_{\D(0,R)\setminus\Omega(t_1)}\int_{\D(0,R)\setminus\Omega(t_1)} \log
|z-\zeta|\, d m(z) d m({\zeta})
$$
$$
=\int_{\D(0,R)\setminus\Omega}\frac{d}{dt_1}\big|_{t_1=0}\big(\int_{\D(0,R)}-\int_{\Omega(t_1)}
\big(\log(\zeta-z)+\log(\bar\zeta-\bar z)\big)\, d m(z)\big) d m({\zeta})
$$
$$
=2\pi\int_{\D(0,R)\setminus\Omega}\big(\frac{(k-1)!}{\zeta^k}+\frac{(k-1)!}{\bar\zeta^k}\big)\,d m(\zeta)
$$
$$
=\I{\pi (k-1)!}\int_{\partial\Omega}\big(\frac{1}{\zeta^k}+\frac{1}{\bar\zeta^k}\big)\,\bar\zeta d \zeta
={-2\pi^2 k!}\big(\frac{1}{k}M_{-k}+\frac{1}{k}\bar M_{-k}\big).
$$

Similarly, the corresponding derivative with respect to $t_2$ gives
$$
\frac{d}{dt_2}\big|_{t_2=0}\int_{\D(0,R)\setminus\Omega(t_1)}\int_{\D(0,R)\setminus\Omega(t_1)} \log
|z-\zeta|\, d m(z) d m({\zeta})
$$
$$
={-2\I \pi^2 k!}(\frac{1}{k}M_{-k}-\frac{1}{k}\bar M_{-k}).
$$
Hence, in the combination $\frac{d}{dt_1}-\I \frac{d}{dt_2}$, $\bar M_{-k}$ will cancel out, and we get
$$
\frac{\partial \mathcal{F}(\Omega)}{\partial M_k}= \frac{1}{k}M_{-k},
$$
as desired.


\section{Hamiltonians}\label{sec:hamiltonians}

As a final topic, we wish to explain, in our notations and settings, the Hamiltonian descriptions
of the evolution of the mapping function $f$ presented in \cite{Wiegmann-Zabrodin-2000} and related articles.
Recall that we work within $\mathcal{M}_{2N+1}$, and we use $M$ as a shorthand notation for the
moments, which serve as coordinates on $\mathcal{M}_{2N+1}$, see (\ref{defM}).

Let $W=W(z;M)$ be a primitive function of the Schwarz function $S=S(z;M)$.
By (\ref{schwarz}), the power series expansion of $W$ will be
\begin{equation}\label{W}
W(z;M)=-\sum_{k\in\Z\setminus\{0\}} \frac{M_k}{kz^k}+M_0\log z +C(M),
\end{equation}
where $C(M)$ is a constant. Decomposing $W$ into real and imaginary parts, $W=w+\I *w$, the real part can be
made perfectly well-defined. In fact, $S(z)=2\frac{\partial w}{\partial z}$, and  since $S(z)=\bar z$ on
$\partial \Omega$ we see that the real-valued function
$$
u(z)= \frac{1}{4}|z|^2 -\frac{1}{2}w(z)
$$
satisfies $\frac{\partial u}{\partial z}=0$ on $\partial \Omega$, hence $u$ is constant on $\partial \Omega$.
We shall fix the free additive constant in $u$, and hence that in $w$, by requiring that $u=0$ on $\partial \Omega$.
It follows that $u$ then satisfies
$$
\begin{cases}
\Delta u =1 \quad \text{in a neighborhood of}\,\,  \partial \Omega,\\
u=\frac{\partial u}{\partial z}=0 \quad \text{on}\,\,  \partial \Omega.
\end{cases}
$$

Next we note from (\ref{W}) that
$$
\frac{\partial W}{\partial M_0}=\log z+\sum_{k>0} \frac{1}{k}\frac{\partial M_{-k}}{\partial M_0}\,z^k +\frac{\partial C}{\partial M_0}.
$$
Thus, as for the real part, $\frac{\partial w}{\partial M_0}=\log |z|+\text{harmonic}$ \quad  in $\Omega$. On $\partial\Omega$ we have
$\frac{\partial w}{\partial M_0}=-2\frac{\partial u}{\partial M_0}=0$, because $u$ vanishes to the second order on $\partial\Omega$.
Hence it follows that $\frac{\partial w}{\partial M_0}$ is
minus the Green's function $g=g_\Omega$ of $\Omega$ with pole at the origin:
\begin{equation}\label{wg}
\frac{\partial w}{\partial M_0}=-g.
\end{equation}

Let $G=g+\I *g$ be the analytic completion of $g$.
This Green's function $G=G(z;M)$ is (like $g$) a function of $z\in\Omega$ and the moments. By (\ref{wg}),
\begin{equation}\label{WG}
G(z;M)=-\frac{\partial W(z;M)}{\partial M_0}.
\end{equation}
Recall that $f=f(\zeta;M)$ denotes the
conformal map (\ref{f}) from $\D$ to $\Omega$. Define the (complex-valued) {\it Hamiltonian function}
$H_0=H_0(\zeta;M)$ associated to $M_0$ by
\begin{equation}\label{H_0}
H_0(\zeta;M)=G(f(\zeta;M);M)=-\log \zeta,
\end{equation}
the last equality because $G\circ f$ is the Green's function of $\D$. We see that $H_0$ actually is independent of the moments.
Therefore, as $\frac{\partial H_0}{\partial M_0}=0$, $\zeta\frac{\partial H_0}{\partial \zeta}=-1$ by (\ref{H_0}), we trivially arrive at
the evolution equation
\begin{equation}\label{varphiH}
\frac{\partial \varphi}{\partial M_0}=\{\varphi,H_0\},
\end{equation}
valid for any smooth function $\varphi=\varphi(\zeta;M)$.

For the higher order moments, there are corresponding identities, and they are more interesting and more
selective: they hold only for $\varphi=f$. We first define, for $k\geq 1$,
\begin{equation}\label{H_k}
H_k(\zeta;M)=-\frac{\partial W(z;M)}{\partial M_k}, \quad \text{where}\,\, z=f(\zeta;M).
\end{equation}
The above means that the derivative $\frac{\partial }{\partial M_k}$ only acts on the
$M_k$ which appears in $W(z;M)$, not that in $f(\zeta;M)$. Always when we write $z$ in place of $f(\zeta;M)$
it shall have this implication.
Now, $G(f(\zeta;M);M)=-\log \zeta$ gives again, by differentiating  with respect to $\zeta$ and $M_k$,
$$
\frac{\partial G}{\partial z}(z;M)\frac{\partial f}{\partial \zeta}(\zeta;M)=-\frac{1}{\zeta},
$$
$$
\frac{\partial G}{\partial z}(z;M)\frac{\partial f}{\partial M_k}(\zeta;M)+\frac{\partial G}{\partial M_k}(z;M))=0.
$$
Thus, multiplying the latter equation with $\zeta\frac{\partial f}{\partial \zeta}$ and using the first equation gives
\begin{equation}\label{fG}
\frac{\partial f}{\partial M_k}(\zeta;M)=\zeta \frac{\partial f}{\partial \zeta}(\zeta;M) \frac{\partial G}{\partial M_k}(z;M).
\end{equation}

Here we wish to remove $G$ the from the right member, in favor of $H_k$. From (\ref{H_k}) we get
$$
\zeta \frac{\partial H_k}{\partial \zeta}(\zeta;M)
=-\frac{\partial^2 W}{\partial z\partial M_k}(z;M)\cdot \zeta \frac{\partial f}{\partial \zeta}(\zeta;M)
$$
and, using also (\ref{WG}),
$$
\zeta \frac{\partial f}{\partial \zeta}(\zeta;M)\frac{\partial H_k}{\partial M_0}(\zeta;M)
$$
$$
=-\zeta \frac{\partial f}{\partial \zeta}(\zeta;M)\frac{\partial^2 W}{\partial z\partial M_k}(z;M)\frac{\partial f}{\partial M_0}(\zeta;M)
-\zeta \frac{\partial f}{\partial \zeta}(\zeta;M)\frac{\partial^2 W}{\partial M_0\partial M_k}(z;M)
$$
$$
=\zeta\frac{\partial H_k}{\partial \zeta}(\zeta;M)\frac{\partial f}{\partial M_0}(\zeta;M)
+\zeta \frac{\partial f}{\partial \zeta}(\zeta;M)\frac{\partial G}{\partial M_k}(z;M).
$$
The last term coincides with the right member in  (\ref{fG}). Thus substituting into (\ref{fG}) we get
\begin{equation}\label{fH}
\frac{\partial f}{\partial M_k}=\{f,H_k\},
\end{equation}
which is the evolution equation for $f$ we wanted to arrive at.

As for the classical Laplacian growth evolution, even though the situation is essentially
trivial in the moment coordinates (in view of the explicit solution (\ref{conserv})),
it may be nice to put it all in a traditional Hamiltonian framework (with real-valued Hamiltonian function).
One possibility then is to choose the phase space to be $\partial\D\times\mathcal{M}_{2N+1}\subset\partial\D\times \R^{2N+1}$,
with real coordinates $(\theta, M_0, \re M_1,\im M_1\dots,\re M_N, \im M_N)$, symplectic form
\begin{align*}
\omega=&\frac{1}{2}d\theta\wedge dM_0 + d\re M_1\wedge d\im M_1+\dots+d\re M_N\wedge d\im M_N \\
=&\frac{1}{2\I}\big[\I d\theta\wedge dM_0 +d\bar M_1\wedge dM_1+ \dots +d\bar M_N\wedge dM_N \big]
\end{align*}
and Hamiltonian function
$$
H(\theta,M_0, \re M_1,\dots, \im M_N )=\theta.
$$

The Hamilton equations are, generally speaking (see \cite{Arnold-1978}),
\begin{equation}\label{hamilton}
-dH = \omega (\xi,\cdot),
\end{equation}
where $\xi=\frac{d}{dt}$ is the velocity vector for the evolution, a vector in the tangent space of the phase space.
In our case we have (using dot for time derivative)
$$
\xi=\dot{\theta}\frac{\partial}{\partial \theta}+\dot{M}_0\frac{\partial}{\partial M_0}+\re\dot{M}_1\frac{\partial}{\partial \re M_1}+
\dots +\im\dot{M}_N\frac{\partial}{\partial \im M_N},
$$
giving in (\ref{hamilton})
$$
\dot{\theta}=0, \quad  \dot{M}_0=2,
$$
$$
\re \dot{M}_j=\im \dot{M}_j=0 \quad (1\leq j\leq N),
$$
as expected (cf. (\ref{conserv})).

Note that the first term in $\omega$, with $dM_0=2dt$, can be identified with $-dx\wedge dy$, in the notation of Proposition~\ref{prop:string}.



\bibliography{bibliography_gbjorn}

\def\cprime{$'$} \def\cprime{$'$}
\begin{thebibliography}{10}

\bibitem{Aharonov-Shapiro-1976}
{\sc D.~Aharonov and H.~S. Shapiro}, {\em Domains on which analytic functions
  satisfy quadrature identities}, J. Analyse Math., 30 (1976), pp.~39--73.

\bibitem{Ahlfors-1966}
{\sc L.~V. Ahlfors}, {\em Complex analysis: {A}n introduction of the theory of
  analytic functions of one complex variable}, Second edition, McGraw-Hill Book
  Co., New York, 1966.

\bibitem{Alexander-Wermer-1998}
{\sc H.~Alexander and J.~Wermer}, {\em Several complex variables and {B}anach
  algebras}, vol.~35 of Graduate Texts in Mathematics, Springer-Verlag, New
  York, third~ed., 1998.

\bibitem{Arnold-1978}
{\sc V.~I. Arnold}, {\em Mathematical methods of classical mechanics},
  Springer-Verlag, New York, 1978.
\newblock Translated from the Russian by K. Vogtmann and A. Weinstein, Graduate
  Texts in Mathematics, 60.

\bibitem{Carey-Pincus-1974}
{\sc R.~W. Carey and J.~D. Pincus}, {\em An exponential formula for determining
  functions}, Indiana Univ. Math. J., 23 (1973/74), pp.~1031--1042.

\bibitem{Cherednichenko-1996}
{\sc V.~G. Cherednichenko}, {\em Inverse logarithmic potential problem},
  Inverse and Ill-posed Problems Series, VSP, Utrecht, 1996.

\bibitem{Davis-1974}
{\sc P.~J. Davis}, {\em The {S}chwarz function and its applications}, The
  Mathematical Association of America, Buffalo, N. Y., 1974.
\newblock The Carus Mathematical Monographs, No. 17.

\bibitem{Farkas-Kra-1992}
{\sc H.~M. Farkas and I.~Kra}, {\em Riemann surfaces}, vol.~71 of Graduate
  Texts in Mathematics, Springer-Verlag, New York, second~ed., 1992.

\bibitem{Fay-1973}
{\sc J.~D. Fay}, {\em Theta functions on {R}iemann surfaces}, Lecture Notes in
  Mathematics, Vol. 352, Springer-Verlag, Berlin, 1973.

\bibitem{Galin-1945}
{\sc L.~A. Galin}, {\em Unsteady filtration with a free surface}, C. R.
  (Doklady) Acad. Sci. URSS (N.S.), 47 (1945), pp.~246--249.

\bibitem{Gustafsson-1984}
{\sc B.~Gustafsson}, {\em On a differential equation arising in a {H}ele-{S}haw
  flow moving boundary problem}, Ark. Mat., 22 (1984), pp.~251--268.

\bibitem{Gustafsson-1990}
\leavevmode\vrule height 2pt depth -1.6pt width 23pt, {\em On quadrature
  domains and an inverse problem in potential theory}, J. Analyse Math., 55
  (1990), pp.~172--216.

\bibitem{Gustafsson-Putinar-1998}
{\sc B.~Gustafsson and M.~Putinar}, {\em An exponential transform and
  regularity of free boundaries in two dimensions}, Ann. Scuola Norm. Sup. Pisa
  Cl. Sci. (4), 26 (1998), pp.~507--543.

\bibitem{Gustafsson-Putinar-2000}
\leavevmode\vrule height 2pt depth -1.6pt width 23pt, {\em Linear analysis of
  quadrature domains. {II}}, Israel J. Math., 119 (2000), pp.~187--216.

\bibitem{Gustafsson-Putinar-2005}
\leavevmode\vrule height 2pt depth -1.6pt width 23pt, {\em Linear analysis of
  quadrature domains. {IV}}, in Quadrature domains and their applications,
  vol.~156 of Oper. Theory Adv. Appl., Birkh\"auser, Basel, 2005, pp.~173--194.

\bibitem{Gustafsson-Sebbar-2012}
{\sc B.~Gustafsson and A.~Sebbar}, {\em Critical points of {G}reen's function
  and geometric function theory}, Indiana University Journal, to appear,
  (2012).

\bibitem{Gustafsson-Shapiro-2005}
{\sc B.~Gustafsson and H.~S. Shapiro}, {\em What is a quadrature domain?}, in
  Quadrature domains and their applications, vol.~156 of Oper. Theory Adv.
  Appl., Birkh\"auser, Basel, 2005, pp.~1--25.

\bibitem{Gustafsson-Tkachev-2009b}
{\sc B.~Gustafsson and V.~Tkachev}, {\em On the {J}acobian of the harmonic
  moment map}, Complex Anal. Oper. Theory, 3 (2009), pp.~399--417.

\bibitem{Gustafsson-Tkachev-2009}
{\sc B.~Gustafsson and V.~G. Tkachev}, {\em The resultant on compact {R}iemann
  surfaces}, Comm. Math. Phys., 286 (2009), pp.~313--358.

\bibitem{Gustafsson-Vasiliev-2006}
{\sc B.~Gustafsson and A.~Vasil{\cprime}ev}, {\em Conformal and potential
  analysis in {H}ele-{S}haw cells}, Advances in Mathematical Fluid Mechanics,
  Birkh\"auser Verlag, Basel, 2006.

\bibitem{Harvey-Lawson-1975}
{\sc F.~R. Harvey and H.~B. Lawson, Jr.}, {\em On boundaries of complex
  analytic varieties. {I}}, Ann. of Math. (2), 102 (1975), pp.~223--290.

\bibitem{Khavinson-Mineev-Putinar-2009}
{\sc D.~Khavinson, M.~Mineev-Weinstein, and M.~Putinar}, {\em Planar elliptic
  growth}, Complex Anal. Oper. Theory, 3 (2009), pp.~425--451.

\bibitem{Kostov-Krichever-Mineev-Wiegmann-Zabrodin-2001}
{\sc I.~K. Kostov, I.~Krichever, M.~Mineev-Weinstein, P.~B. Wiegmann, and
  A.~Zabrodin}, {\em The {$\tau$}-function for analytic curves}, in Random
  matrix models and their applications, vol.~40 of Math. Sci. Res. Inst. Publ.,
  Cambridge Univ. Press, Cambridge, 2001, pp.~285--299.

\bibitem{Krichever-Marshakov-Zabrodin-2005}
{\sc I.~Krichever, A.~Marshakov, and A.~Zabrodin}, {\em Integrable structure of
  the {D}irichlet boundary problem in multiply-connected domains}, Comm. Math.
  Phys., 259 (2005), pp.~1--44.

\bibitem{Kuznetsova-Tkachev-2004}
{\sc O.~S. Kuznetsova and V.~G. Tkachev}, {\em Ullemar's formula for the
  {J}acobian of the complex moment mapping}, Complex Var. Theory Appl., 49
  (2004), pp.~55--72.

\bibitem{Lin-2009b}
{\sc Y.-L. Lin}, {\em Perturbation theorems for {H}ele-{S}haw flows and their
  applications}, Ark. Mat., 49 (2011), pp.~357--382.

\bibitem{Marshakov-Wiegmann-Zabrodin-2002}
{\sc A.~Marshakov, P.~Wiegmann, and A.~Zabrodin}, {\em Integrable structure of
  the {D}irichlet boundary problem in two dimensions}, Comm. Math. Phys., 227
  (2002), pp.~131--153.

\bibitem{Martin-Putinar-1989}
{\sc M.~Martin and M.~Putinar}, {\em Lectures on hyponormal operators}, vol.~39
  of Operator Theory: Advances and Applications, Birkh\"auser Verlag, Basel,
  1989.

\bibitem{Mineev-Zabrodin-2001}
{\sc M.~Mineev-Weinstein and A.~Zabrodin}, {\em Whitham-{T}oda hierarchy in the
  {L}aplacian growth problem}, J. Nonlinear Math. Phys., 8 (2001),
  pp.~212--218.
\newblock Nonlinear evolution equations and dynamical systems (Kolimbary,
  1999).

\bibitem{Pincus-Xia-Xia-1984}
{\sc J.~D. Pincus, D.~Xia, and J.~B. Xia}, {\em The analytic model of a
  hyponormal operator with rank one self-commutator}, Integral Equations
  Operator Theory, 7 (1984), pp.~516--535.

\bibitem{Polubarinova-Kochina-1945}
{\sc P.~Y. Polubarinova-Kochina}, {\em On a problem of the motion of the
  contour of a petroleum shell}, Dokl. Akad. Nauk USSR, 47 (1945),
  pp.~254--257.

\bibitem{Putinar-1988}
{\sc M.~Putinar}, {\em A two-dimensional moment problem}, J. Funct. Anal., 80
  (1988), pp.~1--8.

\bibitem{Putinar-1990}
\leavevmode\vrule height 2pt depth -1.6pt width 23pt, {\em The {$L$} problem of
  moments in two dimensions}, J. Funct. Anal., 94 (1990), pp.~288--307.

\bibitem{Putinar-1994}
\leavevmode\vrule height 2pt depth -1.6pt width 23pt, {\em On a class of
  finitely determined planar domains}, Math. Res. Lett., 1 (1994),
  pp.~389--398.

\bibitem{Putinar-1995}
\leavevmode\vrule height 2pt depth -1.6pt width 23pt, {\em Linear analysis of
  quadrature domains}, Ark. Mat., 33 (1995), pp.~357--376.

\bibitem{Putinar-1996}
\leavevmode\vrule height 2pt depth -1.6pt width 23pt, {\em Extremal solutions
  of the two-dimensional {$L$}-problem of moments}, J. Funct. Anal., 136
  (1996), pp.~331--364.

\bibitem{Putinar-1998}
\leavevmode\vrule height 2pt depth -1.6pt width 23pt, {\em Extremal solutions
  of the two-dimensional {$L$}-problem of moments. {II}}, J. Approx. Theory, 92
  (1998), pp.~38--58.

\bibitem{Reissig-1993}
{\sc M.~Reissig and L.~von Wolfersdorf}, {\em A simplified proof for a moving
  boundary problem for {H}ele-{S}haw flows in the plane}, Ark. Mat., 31 (1993),
  pp.~101--116.

\bibitem{Richardson-1972}
{\sc S.~Richardson}, {\em {H}ele-{S}haw flows with a free boundary produced by
  the injection of fluid into a narrow channel}, J. Fluid Mech., 56 (1972),
  pp.~609--618.

\bibitem{Ross-Witt-Nystrom-2012}
{\sc J.~Ross and D.~Witt~Nystr\"om}, {\em The {H}ele-{S}haw flow and moduli of
  holomorphic discs},  (2012).

\bibitem{Sakai-1978}
{\sc M.~Sakai}, {\em A moment problem on {J}ordan domains}, Proc. Amer. Math.
  Soc., 70 (1978), pp.~35--38.

\bibitem{Sakai-1982}
\leavevmode\vrule height 2pt depth -1.6pt width 23pt, {\em Quadrature domains},
  vol.~934 of Lecture Notes in Mathematics, Springer-Verlag, Berlin, 1982.

\bibitem{Schiffer-Spencer-1954}
{\sc M.~Schiffer and D.~C. Spencer}, {\em Functionals of finite {R}iemann
  surfaces}, Princeton University Press, Princeton, N. J., 1954.

\bibitem{Shapiro-1992}
{\sc H.~S. Shapiro}, {\em The {S}chwarz function and its generalization to
  higher dimensions}, University of Arkansas Lecture Notes in the Mathematical
  Sciences, 9, John Wiley \& Sons Inc., New York, 1992.
\newblock A Wiley-Interscience Publication.

\bibitem{Tian-1996}
{\sc F.~R. Tian}, {\em A {C}auchy integral approach to {H}ele-{S}haw problems
  with a free boundary: the case of zero surface tension}, Arch. Rational Mech.
  Anal., 135 (1996), pp.~175--196.

\bibitem{Tkachev-2005}
{\sc V.~G. Tkachev}, {\em Ullemar's formula for the moment map. {II}}, Linear
  Algebra Appl., 404 (2005), pp.~380--388.

\bibitem{Ullemar-1980}
{\sc C.~Ullemar}, {\em Uniqueness theorem for domains satisfying a quadrature
  identity for analytic functions}, Research Bulletin TRITA-MAT-1980-37, Royal
  Institute of Technology, Department of Mathematics, Stockholm, 1980.

\bibitem{Varchenko-Etingof-1992}
{\sc A.~N. Varchenko and P.~I. Etingof}, {\em Why the Boundary of a Round Drop
  Becomes a Curve of Order Four}, American Mathematical Society AMS University
  Lecture Series, Providence, Rhode Island, third~ed., 1992.

\bibitem{Vasiliev-2009}
{\sc A.~Vasil{\cprime}ev}, {\em From the {H}ele-{S}haw experiment to integrable
  systems: a historical overview}, Complex Anal. Oper. Theory, 3 (2009),
  pp.~551--585.

\bibitem{Vinogradov-Kufarev-1948}
{\sc Y.~P. Vinogradov and P.~P. Kufarev}, {\em On a problem of filtration},
  Akad. Nauk SSSR. Prikl. Mat. Meh., 12 (1948), pp.~181--198.

\bibitem{Weil-1979}
{\sc A.~Weil}, {\em Scientific works. {C}ollected papers. {V}ol. {I}
  (1926--1951)}, Springer-Verlag, New York, 1979.

\bibitem{Weyl-1964}
{\sc H.~Weyl}, {\em Die {I}dee der {R}iemannschen {F}l\"ache}, Vierte Auflage.
  Unver\"anderter Nachdruck der dritten, Vollst\"andig umgearbeiteten Auflage,
  B. G. Teubner Verlagsgesellschaft, Stuttgart, 1964.

\bibitem{Wiegmann-Zabrodin-2000}
{\sc P.~B. Wiegmann and A.~Zabrodin}, {\em Conformal maps and integrable
  hierarchies}, Comm. Math. Phys., 213 (2000), pp.~523--538.

\bibitem{Yakubovich-2006}
{\sc D.~V. Yakubovich}, {\em Real separated algebraic curves, quadrature
  domains, {A}hlfors type functions and operator theory}, J. Funct. Anal., 236
  (2006), pp.~25--58.

\bibitem{Zalcman-1987}
{\sc L.~Zalcman}, {\em Some inverse problems of potential theory}, in Integral
  geometry ({B}runswick, {M}aine, 1984), vol.~63 of Contemp. Math., Amer. Math.
  Soc., Providence, RI, 1987, pp.~337--350.

\end{thebibliography}

\end{document}